\newtheorem{theorem}{Theorem}[section]
\newtheorem{prop}[theorem]{Proposition}
\newtheorem{lemma}[theorem]{Lemma}
\newtheorem{cor}[theorem]{Corollary}
\theoremstyle{definition}
\newtheorem{definition}[theorem]{Definition}
\newtheorem{remark}[theorem]{Remark}
\newtheorem{example}[theorem]{Example}
\newcommand{\rr}{\mathbb{R}}
\newcommand{\qq}{\mathbb{Q}}
\newcommand{\cc}{\mathbb{C}}
\newcommand{\nn}{\mathbb{N}}
\newcommand{\zz}{\mathbb{Z}}
\newcommand{\nsr}{{^*\mathbb{R}}}
\newcommand{\nsn}{{^*\mathbb{N}}}
\newcommand{\nsz}{{^*\mathbb{Z}}}
\newcommand{\noo}{{^*\mathcal{O}}}
\newcommand{\oo}{\mathcal{O}}
\newcommand{\mm}{\mathfrak{m}}
\newcommand{\aaa}{\mathfrak{a}}
\newcommand{\bbb}{\mathfrak{b}}
\newcommand{\ccc}{\mathfrak{c}}
\newcommand{\pp}{\mathfrak{p}}
\newcommand{\srad}{{^*Rad}\:}
\newcommand{\rad}{Rad\:}
\newcommand{\fin}{\text{fin}\hspace{0.5mm} }
\DeclareFontFamily{OT1}{pzc}{}
\DeclareFontShape{OT1}{pzc}{m}{it}{<-> s * [1.100] pzcmi7t}{}
\DeclareMathAlphabet{\mathpzc}{OT1}{pzc}{m}{it}
\title[Nonstandard Dedekind Rings]{On the Spectrum of Nonstandard Dedekind Rings }
\author[H Knospe]{Heiko Knospe}
\address{Technische Hochschule K\"oln,
Institut f\"ur Nachrichtentechnik,
Betzdorfer Str. 2,
50679 K\"oln, Germany}
\email{heiko.knospe@th-koeln.de}
\urladdr{}
\author[C Serp\'{e}]{Christian Serp\'{e}}
\address{Westf\"alische Wilhelms-Universit\"at M\"unster,
Fachbereich Mathematik und Informatik,
Einstein\-str. 62,
48149 M\"unster,
Germany}
\email{serpe@uni-muenster.de}
\urladdr{}
\begin{document}

%\author{Heiko Knospe}
%\author{Christian Serp\'{e}}

\begin{abstract}
The methods of nonstandard analysis are applied to algebra and number theory. We study nonstandard Dedekind rings, for example an ultraproduct of the ring of integers of a number field. Such rings possess a rich structure and have interesting relations to standard Dedekind rings and their completions.  We use lattice theory to classify the ideals of nonstandard Dedekind rings. The nonzero prime ideals are contained in exactly one maximal ideal and can be described using valuation theory. The localisation at a prime ideal gives a valuation ring and we determine the value group and the residue field. The spectrum of a nonstandard Dedekind ring is described using lattices and value groups.
Furthermore, we investigate the Riemann-Zariski space and the valuation spectrum of nonstandard Dedekind rings and their quotient fields. 
\end{abstract}

\begin{asciiabstract}

\end{asciiabstract}

\maketitle

%{\small {\noindent \em Keywords:} Nonstandard mathematics, ultraproducts, nonstandard number fields, nonstandard Dedekind rings, completions, lattices, filters.}

%{\small \noindent 2010 {\em Mathematics subject classification:} Primary 11U10, Secondary 13F05 }

\section{Introduction}

 Nonstandard analysis was invented in the 1960s by Abraham Robinson (\cite{robinson1961}, \cite{robinson1996}) and nonstandard methods have been successfully applied to calculus, functional analysis and stochastic in the last decades. Algebraic and arithmetic applications also exist but are less common. The fundamental contributions go back to A. Robinson (\cite{algnum}, \cite{dedekind}, \cite{arithmetic}, \cite{topics}) and a major result was the nonstandard proof of the Siegel-Mahler Theorem by A. Robinson and P. Roquette (\cite{robinson-roquette}).  It was shown that there is a relation between the geometry of varieties over number fields  and the number theory of nonstandard number fields (\cite{macintyre}, \cite{kani1980}). Ideals in nonstandard number fields were studied in \cite{cherlin}, \cite{cherlindedekind} and \cite{klingen1975}.

The main aim of this paper is to recall,  reformulate and extend the results on the ideal structure of nonstandard Dedekind domains and to describe the spectrum and the valuation spectrum. This contribution focuses on nonstandard {\em number theory}; subsequent work will treat the {\em geometric} and {\em arithmetic} aspects.

In Section 2 we review the relevant facts on lattices, filters and ideals. We relate ideals of rings to ideals of lattices. Then we briefly introduce nonstandard extensions and recall some definitions and principles from nonstandard analysis.  

Section 3 contains a description of ideals in internal Dedekind rings using lattice and valuation theory. These rings have a rich ideal structure. We show 
how lattices and filters can be used to to classify general ideals, prime ideals and maximal ideals.
An important property is that internal Dedekind rings are Pr\"ufer rings. Each prime ideal is contained in exactly one maximal ideal. We consider the corresponding valuation rings and their residue fields and determine the value groups. 
The chain of prime ideals in a given maximal ideal corresponds to the ordered set of subgroups of the value group. We describe the spectrum and the maximum spectrum using lattices, ultrafilters and value groups.

In Section 4 the Riemann-Zariski space and the valuation spectrum of internal Dedekind rings and their quotient fields is investigated.  
It is also shown that Archimedean absolute values can be transformed into true valuations.

\section{Lattice Theory and Nonstandard Extensions}

\subsection{Lattices, Filters and Ideals}
\label{lattices}
We need the notion of {\em lattices} and {\em filters} and recall some definitions and facts (see for example \cite{birkhoff1948lattice}, \cite{gratzer2011lattice}). 
A {\em lattice} is a  partially ordered set $\mathcal{L}$ such any two elements  have a supremum and an infimum. If one defines for $a,b\in\mathcal{L}$
$$ a\lor b :=\sup\{a,b\} \text{ and } a\land b:=\inf\{a,b\} , $$ 
we get an algebraic structure which is associative, commutative and satisfies the absorption laws
$$\forall a,b\in\mathcal{L}: a\lor (a\land b)= a \text{ and } a\land (a\lor b)=a$$ 
On the other hand, each set with two operations $(\mathcal{L},\lor,\land)$ with associativity, commutativity and absorptions laws defines a lattice by 
$$ a\leq b :\Leftrightarrow a\land b = a \ , $$
or equivalently
$$ a\leq b :\Leftrightarrow a\lor b = b \ .$$

If we reverse the order of a lattice $\mathcal{L}$, respectively if we interchange the two  operations $\lor$ and $\land$, we get again a lattice $\mathcal{L}^{\lor}$ which is called the {\em dual lattice}. 

A subset $\mathcal{F}\subset \mathcal{L}$ is called {\em filter} if the following holds:
\begin{enumerate}
\item $\forall a,b \in \mathcal{F} : a\land b \in\mathcal{F}$, and
\item $\forall a\in \mathcal{F} \;\forall b\in\mathcal{L}: a\lor b \in\mathcal{F}$.
\end{enumerate}

In terms of the order structure this can be formulated as follows:

\begin{enumerate}
\item $\forall a,b \in \mathcal{F} : \inf\{a,b\}\in\mathcal{F}$, and
\item $\forall a\in \mathcal{F}\; \forall b\in\mathcal{L}: a \leq b \Rightarrow b\in\mathcal{F}$. 
\end{enumerate}

A filter is called {\em principal} if it has a least element. Otherwise, the filter is called {\em free}.

The dual notion of a filter is that of an {\em ideal}. A subset $\mathpzc{A}\subset \mathcal{L}$ is called {\em ideal} if the following holds:
\begin{enumerate}
\item $\forall a,b\in\mathpzc{A}: a\lor b \in\mathpzc{A}$, and 
\item $\forall a\in\mathpzc{A}\;\forall b\in\mathcal{L}: a\land b\in\mathpzc{A}$.
\end{enumerate}

We see that $\mathpzc{A}\subset \mathcal{L}$ is an ideal if and only if $\mathpzc{A}\subset \mathcal{L}^{\lor}$ is a filter.

The sets of filters and ideals of a lattice is naturally ordered by inclusion and gives again a lattice which we denote by $\mathpzc{Filt}(\mathcal{L})$ and $\mathpzc{Id}(\mathcal{L})$.

 A filter $\mathcal{F}\subset \mathcal{L}$ is called {\em proper} if $\mathcal{F}\neq\mathcal{L}$. A proper filter $\mathcal{F}$ is called {\em prime filter} if the following
holds:
$$\forall a,b\in\mathcal{F}: a\lor b \in\mathcal{F} \Rightarrow a\in\mathcal{F} \text{ or } b\in\mathcal{F}$$

  A proper filter is called {\em maximal filter} or {\em ultrafilter} if it is maximal among the proper filters. The dual notions are {\em proper ideal}, {\em prime ideal} and {\em maximal ideal}.

A lattice is called {\em distributive} if
$$\forall a,b,c\in \mathcal{L}: a\land (b\lor c)=(a\land b)\lor(a\land c)$$
or equivalently
$$\forall a,b,c\in \mathcal{L}: a\lor (b\land c)=(a\lor b)\land(a\lor c).$$

A lattice is called {\em bounded} if there exists a minimal element $0\in\mathcal{L}$ and a maximal element $1\in\mathcal{L}$. Furthermore, a bounded lattice is called {\em complemented} if
$$\forall a\in\mathcal{L}\;\exists b\in\mathcal{L}: a\land b = 0 \text{ and }  a \lor b = 1.$$
A complemented distributive lattice is a {\em Boolean Algebra}.

A lattice is called {\em relatively complemented} if for all $a,b\in\mathcal{L}$ the interval $[a,b]:=\{c\in\mathcal{L}\ \vert\  a\leq c\leq b\}$ is complemented. 
If a lattice is distributive, then the complement of an element is unique. 
A distributive lattice is relatively complemented if and only if every prime ideal is maximal \cite{gratzer1958ideal}.

For any set $M$, the power set $\mathcal{P}(M)$ and the set of finite subsets $\mathcal{P}^{fin}(M)$ ordered by inclusion are examples of distributive 
and relatively complemented lattices with a minimal element.  $\mathcal{P}(M)$ is a Boolean algebra.

Let $\mathpzc{Id}(R)$ be the set of ideals of a ring $R$. With the natural inclusion as ordering $\mathpzc{Id}(R)$ becomes a lattice, where $\lor$ is the sum and $\land$ is the intersection of ideals. By $\mathpzc{Id}_{fg}(R)$ we denote the set of finitely generated ideals. Suppose that $R$ is a coherent ring. Then the intersection of two finitely generated ideals is again finitely generated (see \cite{wisbauer} 26.6) and $\mathpzc{Id}_{fg}(R)\subset \mathpzc{Id}(R)$ is a sublattice. Quite useful in what follows is the map
$$\mathpzc{Id}(R)\rightarrow \mathpzc{Id}(\mathpzc{Id}_{fg}(R))$$
which sends an ideal $\mathpzc{a}\subset R$ to the ideal $\{\mathfrak{b}\subset\mathfrak{a}\ \vert\ \mathfrak{b}\in\mathpzc{Id}_{fg}(R)\}$ in the 
lattice $\mathpzc{Id}_{fg}(R)$.
This map is an {\em isomorphism of lattices} and the inverse map sends an ideal $\mathpzc{A}\subset\mathpzc{Id}_{fg}(R)$ to the ideal $\mathfrak{a}=\cup_{\mathfrak{b}\in\mathpzc{A}} \mathfrak{b} \subset R$. 
So an ideal in $R$ corresponds to an ideal in the lattice of finitely generated ideals. 
This fact is particularly useful in our context because finitely generated ideals of an internal ring are automatically {\em internal} (see Subsection \ref{sec:nse} below). 

There is a one-to-one correspondence between maximal ideals in $R$ 
and maximal ideals in the lattice $\mathpzc{Id}_{fg}(R)$. Furthermore, a prime ideal $ \mathfrak{p} \subset R$ yields a prime ideal in $\mathpzc{Id}_{fg}(R)$, since 
$\mathfrak{a}_1 \cap \mathfrak{a_2} \subset \mathfrak{p}$ gives $\mathfrak{a}_1 \cdot \mathfrak{a_2} \subset \mathfrak{p}$
and therefore $\mathfrak{a}_1 \subset \mathfrak{p}$ or $\mathfrak{a_2} \subset \mathfrak{p}$. On the other hand, there exist lattice prime ideals
with the property that the associated ideal in $R$ is not prime. For example, let $R=\zz$ and $p$ a prime number. Then the {\em lattice ideal} of all ideals
contained in $(p^n)$ is prime for any $n \in \nn$. This is due to the fact that lattice primality is based on the intersection of ideals and not on the product. 

Let $\mathpzc{Id}^{\times}(R)$ be the set of nonzero ideals and $\mathpzc{Id}_{fg}^{\times}(R)$ the set of nonzero finitely generated ideals.
If $R$ is a domain then these sets are sublattices and the above statements also hold for these lattices.

In general, the lattices $\mathpzc{Id}(R)$ and $\mathpzc{Id}_{fg}(R)$ are not distributive, but they are if and only if  $R$ is a Pr\"ufer ring \cite{jensen}.

\subsection{Nonstandard Extensions}
\label{sec:nse}

We briefly recall some notions and prerequisites from nonstandard analysis (see for instance \cite{albeverio1986}, \cite{lr}, \cite{goldblatt}, \cite{vaeth} for general expositions of the topic). 

The original definition of nonstandard extensions by A. Robinson is based on model theory \cite{robinson1996}. An explicit construction using {\em ultrapowers} was given by W.A.J. Luxemburg \cite{luxemburg}. Classically, sequences of real numbers modulo a free (non-principal) ultrafilter define the {\em hyperreal numbers} $\nsr$, which extend $\rr$ and contain additional infinitesimal and infinite numbers. It can be shown that the ultrapower construction can be applied to almost any mathematical object $X$ that is contained in a {\em superstructure} $\widehat{S}$ above some base set $S$. There exists an {\em embedding} map $*:\ \widehat{S} \rightarrow {\widehat{^*S}}$ from the superstructure over $S$ to the superstructure over $^*S$, which behaves in a {\em functorial} way. For details we refer to \cite{serpe}.

For our purposes, we fix a base set $S$ and a nonstandard embedding $*:\ \widehat{S} \rightarrow {\widehat{^*S}}$, which is at least countable saturated. We do not require a particular construction of $*$, but remark that such an embedding can be constructed by choosing a free
ultrafilter on $\nn$ 
and mapping $X \in {\widehat{S}}$ to the ultrapower ${^*X}:=X^{\nn} / \thicksim$ . The ultrapower is defined as the cartesian product $X^{\nn}$ modulo the ultrafilter. Two sequences are equivalent iff they are identical on an index set which is contained in the ultrafilter, and we say they agree {\em almost always}. 
If $X \subset S$ then the ultraproduct $^*X$ obviously defines a subset of $^*S$. For a general object $X \in {\widehat{S}}$,  one uses the Mostowski collapsing function (see \cite{loeb} 2.5) to show that the ultraproduct ${^*X}$ can be viewed as an element of the superstructure ${\widehat{^*S}}$ over ${^*S}$.
Elements $Y \in {\widehat{^*S}}$ are called {\em standard} if $Y={^*X}$ for $X \in {\widehat{S}}$. We call ${^*X}$ the {\em nonstandard extension}  of $X$. Note that ${^*X}$ is standard in ${\widehat{^*S}}$ but nonstandard relative to $\widehat{S}$. Furthermore, a set $Y$ is called {\em internal} if 
$Y \in {^*X}$ for $X \in {\widehat{S}}$. Internal sets are given by ultraproducts $\prod_{i\in\nn}X_i /\sim\,$, where $X_i \in X$ for each $i \in \nn$.  It is not difficult to see that there exist elements of ${\widehat{^*S}}$, which are not internal and therefore called {\em external}.  %   \\
Elements in $\,^*\mathcal{P}^{fin}(M)$ for sets $M \in {\widehat{S}}$ are called {\em hyperfinite} or $*$-finite. Hyperfinite sets are internal and can be represented by ultraproducts of finite sets. If $Y$ is an internal set, then the set of internal subsets $\,^*\mathcal{P}(Y)$ and the set of $^*$-finite subsets $\,^*\mathcal{P}^{fin}(Y)$ are distributive, relatively complemented lattices with a minimal element.

This contribution uses nonstandard extensions  of sets, groups, rings and fields. 
The nonstandard embedding $*$ satisfies a number of important principles for which we refer to the literature:  the {\em Transfer Principle}, {\em Countable Saturation}, {\em Countable Comprehension} and the
{\em Permanence Principle}. 

\section{Ideals in Nonstandard Dedekind Rings}
\label{fields}

\subsection{Internal Dedekind Rings}
Our notation is as follows: $A$ is a Dedekind domain with quotient field $K$, for example a number field $K$ with ring of integers $A=\oo_K$. 

$|\ \ |_v  : K \rightarrow \mathbb{R}_{\geq 0}$ 
denotes an absolute value of $K$. An absolute value is called {\em non-Archimedean}, if it satisfies the strong triangle inequality. Otherwise, the 
absolute value is called {\em Archimedean}  and one can show that all Archimedean absolute values arise by embedding $K$ into $\rr$ or
$\cc$ and taking the standard norm (Big Ostrowski Theorem).

\begin{definition} 
Let $B$ be an internal domain with quotient field $L$. 
\begin{enumerate}
\item The $*$-integral closure of $B$ in $L$ is the ring of all $x \in L$ which are zeros of
 a polynomial
$f(x)=\displaystyle\sum_{i=0}^N a_i x^i$ of hyperfinite degree $N \in \nsn$. 

\item $B$ is 
$*$-integrally closed if $B$ is the $*$-integral closure of $B$ in $L$.  

\item $B$ is called internal Dedekind ring or nonstandard Dedekind ring if $B$ is $*$-integrally closed and every internal prime ideal is maximal.\hfill$\lozenge$
\end{enumerate}
\end{definition}

The definitions of $*$-integral closure, $*$-integrally closed and internal Dedekind ring are hence obtained by transferring the corresponding standard definitions. For a standard Dedekind domain $A$, the nonstandard extension ${^*A}$ is an internal Dedekind ring. We will denote internal Dedekind rings by $B$ and their quotient field by $L$, for example $B={^*A}$ and $L={^*K}$. 

\begin{example} Let $p$ be a standard prime, $N \in \nsn$ be an infinite number and $\zeta_{p^N}$ a primitive $p^N$-th root of unity. 
Then $B=\nsz[\zeta_{p^N}]$ is an internal Dedekind ring which contains all standard Dedekind rings $\zz[\zeta_{p^n}]$ for $n \in \nn$.\hfill$\lozenge$
\end{example}

Similar as above, a nonstandard absolute value $|\ \ |_v : L \rightarrow \nsr_{\geq 0}$ is called {\em non-Archimedean} if the strong triangle inequality holds, and otherwise  {\em Archimedean}. 

Valuations $v: R \rightarrow G \cup \{\infty\}$ where $R$ is a ring (or field) and $G$ an ordered group
are written additively in this paper (exponential or Krull valuation). Note that the multiplicative notation is also used in the recent literature.
We do not restrict the value group $G$ in this paper and do {\em not require that $v$ is standard or internal}. We will see below that interesting 
constructions arise from external valuations.

\begin{remark} 
We give an explicit construction of internal fields and domains. Let $I$ be an index set, e.g.\ $I=\nn$, and $(K_i)_{i\in I}$ a family of fields, e.g.\ $K_i=K$ for a number field $K$. Set $R = \prod_{i \in I} K_i$. For $x=(x_i) \in R$, let $Z(x)=\{ i \in I \ |\ x_i=0\}$. An ideal $\aaa \subset R$ yields a filter
$\mathcal{F}=\{ Z(x)\ |\ x \in \aaa \}$ on $I$. Obviously, $\mathcal{F}$ is upward closed. We claim  $Z(x) \cap Z(y) \in \mathcal{F}$ for $x,y \in \aaa$. One can assume that $x$ and $y$ is a sequence of zeros and ones. Set $w=x+(1-x)y \in \aaa$. Then $Z(w)=Z(x) \cap Z(y) \in \aaa$.
Conversely, every filter $\mathcal{F}$ on $I$ gives an ideal $\aaa=\{ x \in R \ |\ Z(x) \in \mathcal{F} \} \subset R$. Indeed, $rx \in \aaa$ if $r \in R$ and $x \in \aaa$. Furthermore, $x+y \in \aaa$ for $x,y \in \aaa$ since $Z(x) \cap Z(y) \subset Z(x+y)$.
This defines a bijection between ideals of $R$ and filters on $I$. Furthermore, it gives a bijection between maximal ideals and maximal filters. If $\aaa$ is maximal then the quotient $L=R/\aaa$ is an {\em ultraproduct} of the fields $K_i$. The ultraproduct is also denoted by $\left(\prod_i K_i \right)/ \mathcal{F}$.

Suppose that our nonstandard embedding $*$ is defined by taking ultraproducts using a fixed free ultrafilter $\mathcal{F}$ on $I$, for example on $I=\nn$. 
Then all internal objects arise by taking ultraproducts with respect to $\mathcal{F}$.
Let $(A_i)_{i \in I}$ be a family of domains with quotient fields $K_i$. Then the ultraproduct $B=\left(\prod_{i \in I} A_i \right)/ \mathcal{F}$ is an internal domain.
As above, two elements of $\prod_{i \in I} A_i $ are identified in $B$ if they coincide on an index set which is contained in $\mathcal{F}$.\hfill$\lozenge$
\end{remark}

The nonstandard extension of a number field and its ring of integers is particularly interesting.  The following Proposition shows that the different integral closures coincide.

\begin{prop} Let $K$ be a number field and $\oo_K$ the ring of integers. Let $\oo_{^*K}$ 
be the standard integral closure  and ${^*\oo}_{^*K}$ the $*$-integral closure of $\nsz$ in $^*K$. 
Then
$$^*(\oo_K)=({^*\oo})_{^*K}=\oo_{^*K} \ . $$ 
\end{prop}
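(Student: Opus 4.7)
The plan is to establish the chain of inclusions
$$ ^*(\oo_K) \;\subset\; \oo_{^*K} \;\subset\; (^*\oo)_{^*K} \;=\; {^*(\oo_K)}, $$
which forces all three rings to coincide. The middle inclusion is immediate because every standard-degree monic polynomial is, in particular, a hyperfinite-degree one, so standard integrality over $\nsz$ implies $*$-integrality over $\nsz$.

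For the equality $(^*\oo)_{^*K} = {^*(\oo_K)}$ I would simply invoke the Transfer Principle. The statement
\[
  \forall x \in K:\; x\in\oo_K \iff \exists n\in\nn\;\exists a_0,\ldots,a_{n-1}\in\zz\;\bigl(x^n+a_{n-1}x^{n-1}+\cdots+a_0=0\bigr)
\]
is first-order over the superstructure, and its transfer reads exactly
\[
  \forall x\in {^*K}:\; x\in {^*(\oo_K)} \iff \exists N\in\nsn\;\exists a_0,\ldots,a_{N-1}\in\nsz\;\bigl(x^N+\cdots+a_0=0\bigr),
\]
which is the definition of the $*$-integral closure of $\nsz$ in $^*K$.

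The essential inclusion is $^*(\oo_K)\subset \oo_{^*K}$, and this is where the key observation lies: since $d:=[K:\qq]$ is a \emph{standard finite} integer, every element of $\oo_K$ satisfies a monic polynomial over $\zz$ of degree at most $d$ (its characteristic polynomial over $\qq$, whose coefficients lie in $\zz$ because the element is integral). Transferring the first-order sentence
\[
  \forall x\in \oo_K\;\exists a_0,\ldots,a_{d-1}\in\zz\;\bigl(x^d+a_{d-1}x^{d-1}+\cdots+a_0=0\bigr)
\]
yields the same sentence with $\oo_K$, $\zz$ replaced by $^*(\oo_K)$, $\nsz$ and the \emph{same} standard bound $d$. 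Hence every element of $^*(\oo_K)$ is a root of a monic polynomial of standard finite degree over $\nsz$, i.e.\ lies in $\oo_{^*K}$.

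The only subtle point, and the step that really drives the argument, is the observation that the bound $d$ does not become hyperfinite after transfer because it is already a standard natural number. If $K$ were replaced by a field of infinite degree over $\qq$ this argument would break down, and one would in general expect $\oo_{^*K} \subsetneq (^*\oo)_{^*K}$; the finiteness of $[K:\qq]$ is precisely what collapses the two notions of integral closure on the nonstandard extension.
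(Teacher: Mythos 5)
Your proof is correct and follows essentially the same two-step route as the paper: first apply the Transfer Principle to identify $^*(\oo_K)$ with $(^*\oo)_{^*K}$, then exploit the finiteness of $d=[K:\qq]$ to collapse the $*$-integral closure to the standard one. The one genuine difference is in how the finiteness of $d$ is used: the paper observes that the minimal polynomial over $^*\qq$ of any element of $^*K$ that is $*$-integral over $\nsz$ has degree at most $d$ and (implicitly, via the fact that $\nsz$ is integrally closed in $^*\qq$) coefficients in $\nsz$, giving $(^*\oo)_{^*K}\subset\oo_{^*K}$; you instead transfer the bounded statement that every element of $\oo_K$ satisfies a monic degree-$d$ integer polynomial, giving $^*(\oo_K)\subset\oo_{^*K}$ directly. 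Your version is slightly more self-contained, since it avoids the implicit appeal to properties of minimal polynomials over integrally closed domains, but the essential observation---that the standard bound $d$ survives transfer---is the same in both proofs, and your closing remark about the failure for infinite-degree extensions is exactly the right intuition.
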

\begin{proof} A general statement on the $*$-value of sets defined by formulas (see \cite{lr} 7.5) implies 
\begin{align*}
{^* (\oo_K)} & = {^*\{ x \in K \ |\ \exists n \in \nn \ \exists a_0,\dots,a_{n-1} \in \zz \ :  \ x^n + a_{n-1}x^{n-1} + \dots + a_0 = 0 \} } \\
 & = \{ x \in {^*K} \ |\ \exists n \in \nsn \ \exists a_0,\dots,a_{n-1} \in \nsz \ : \ x^n + a_{n-1}x^{n-1} + \dots + a_0 = 0 \} \\
 & = ({^*\oo})_{^*K}
\end{align*} 
Since $[K:\qq]=[{^*K}:{^* \qq}]$ is finite, the minimal polynomial of an element in ${^*K}$ that is integral over $\nsz$ must have finite degree $n \in \nn$. This shows $({^*\oo})_{^*K}=\oo_{^*K}$.
\end{proof}

We return to the general case of an internal Dedekind ring $B$.  Let $\mathcal{M}$ be the set of internal maximal ideals of $B$. We denote by ${^*\bigoplus}_{\mathcal{M}} {^*\zz}$ the group of internal Weil divisors of $B$. This is the set of internal functions from $\mathcal{M}$ to $\nsz$ with hyperfinite support, i.e.\ internal functions which are zero outside a hyperfinite subset of $\mathcal{M}$.

The next Proposition follows from properties of standard Dedekind rings and application of the Transfer Principle.

\begin{prop} Let $B$ be an internal Dedekind ring and $L$ its quotient field. 
\begin{enumerate}
\item An ideal of ${B}$ is internal if and only if it is finitely generated (by at most two elements). 
\item All internal ideals $\neq (0)$ are invertible.
\item All internal prime ideals $\neq (0)$ are maximal.
\item Every internal maximal ideal on $B$ induces a valuation of ${L}$ with value group $\nsz$ and there is a one-to-one correspondence between the set 
of internal valuations of $L$ and the set of internal maximal ideals of $B$. 
\item
The nonzero internal fractional ideals of $L$  factor into a hyperfinite product of hyperfinite powers of internal maximal ideals. There is a natural isomorphism between the nonzero internal fractional ideals and the group of internal Weil divisors ${^*\bigoplus}_{\mathcal{M}} {^*\zz}$.
\item The nonzero internal ideals of $B$ form a lattice that is isomorphic to $({^*\bigoplus}_{\mathcal{M}} {^*\mathbb{N}})^{op}$. The intersection of ideals corresponds to the maximum value and the sum to the minimum value at each internal maximal ideal.
\end{enumerate}
\label{ideals}
\end{prop}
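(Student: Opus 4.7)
My plan is to prove all six parts uniformly by the Transfer Principle. Since $B$ is by hypothesis an internal ring satisfying internally the axioms of a Dedekind domain, any first-order theorem of classical Dedekind theory transfers to $B$ with ``ideal'' replaced by ``internal ideal'', ``finite'' by ``hyperfinite'', and $\nn, \zz$ replaced by $\nsn, \nsz$.

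For (a), the forward direction is immediate: $(b_1, b_2) = \{r_1 b_1 + r_2 b_2 : r_1, r_2 \in B\}$ is defined by an internal formula with internal parameters, hence internal. For the converse I transfer the classical two-generator theorem, which says every ideal of a Dedekind domain is generated by at most two elements; the transferred sentence gives the same assertion for internal ideals of $B$. Part (b) follows by transferring the invertibility of nonzero ideals, and (c) is part of the definition of internal Dedekind ring.

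For (d), each nonzero prime $\pp$ of a standard Dedekind domain determines the $\pp$-adic valuation $v_\pp: K^\times \to \zz$, and conversely every discrete valuation of $K$ whose valuation ring contains $A$ arises this way by contracting the maximal ideal of the valuation ring. Transfer provides the internal analogue with value group $\nsz$ and the stated bijection. For (e) and (f) I transfer the classical unique factorisation of nonzero fractional ideals as finite products of prime powers: the transferred sentence asserts that every nonzero internal fractional ideal has a unique representation as a hyperfinite product $\prod_{\mm \in H} \mm^{n_{\mm}}$ with $H \subset \mathcal{M}$ hyperfinite and $n_{\mm} \in \nsz$, yielding the isomorphism with the group of internal Weil divisors. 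The classical lattice identities $\aaa \cap \bbb = \prod_{\mm} \mm^{\max(a_{\mm}, b_{\mm})}$ and $\aaa + \bbb = \prod_{\mm} \mm^{\min(a_{\mm}, b_{\mm})}$ transfer verbatim and give the lattice isomorphism of (f).

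The main obstacle is organisational rather than substantive: one must phrase each classical fact as a bounded first-order sentence in the superstructure so that transfer applies, and verify that the maps realising the correspondences (maximal ideals $\leftrightarrow$ valuations; ideals $\leftrightarrow$ exponent vectors) are internal. Since none of the statements involves genuinely external objects, this verification is mechanical once the formulations are fixed.
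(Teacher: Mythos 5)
Your proposal is correct and follows essentially the same route as the paper, which simply asserts that the proposition ``follows from properties of standard Dedekind rings and application of the Transfer Principle'' without giving further detail. Your write-up just makes explicit which classical facts (two-generator theorem, invertibility, prime--valuation correspondence, unique factorisation and the lattice identities for $\cap$ and $+$) are being transferred, which is exactly the intended argument.
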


The internal valuations correspond to finitely generated maximal ideals, but we will see below that there are many additional {\em external} valuations of $L$. The following fact is important for the classification of arbitrary prime ideals and permits us to leverage valuation theory.

\begin{cor} Let $B$ be an internal Dedekind ring. Then $B$ is a Pr\"ufer ring, i.e.\ finitely generated ideals are invertible. For every prime ideal $\pp$ the localisation $B_{\pp}$ is a valuation ring.
\label{pruefer}
\end{cor}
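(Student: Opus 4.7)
The plan is to read this off directly from the preceding Proposition~\ref{ideals} together with a classical characterization of Pr\"ufer domains. By Proposition~\ref{ideals}(a), an ideal of $B$ is internal if and only if it is finitely generated; in fact, every finitely generated ideal is generated by at most two elements, so in particular every finitely generated ideal of $B$ is internal. By Proposition~\ref{ideals}(b), every nonzero internal ideal of $B$ is invertible. Combining these two facts, every nonzero finitely generated ideal of $B$ is invertible, which is exactly the definition of a Pr\"ufer ring. Thus the first assertion is immediate.

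For the second assertion I would invoke the standard equivalence: a domain $B$ is Pr\"ufer if and only if $B_\mathfrak{m}$ is a valuation ring for every maximal ideal $\mathfrak{m}$ (see, e.g., the references cited in Section~2 or standard commutative algebra texts, such as the equivalent formulations via \cite{jensen}). Since localisations of valuation rings are again valuation rings, one gets the statement for every prime ideal $\mathfrak{p}$, not just maximal ones: if $\mathfrak{p} \subset \mathfrak{m}$ is a maximal ideal containing $\mathfrak{p}$, then $B_\mathfrak{p} = (B_\mathfrak{m})_{\mathfrak{p} B_\mathfrak{m}}$ is a localisation of the valuation ring $B_\mathfrak{m}$, hence a valuation ring itself.

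There is essentially no substantive obstacle here, since the content has been packaged into Proposition~\ref{ideals}. The only minor point worth writing out carefully is the reduction from ``every finitely generated ideal is invertible'' to ``every localisation at a prime is a valuation ring'', which can alternatively be done by a short direct argument: for $a,b \in B$ nonzero, the ideal $(a,b)$ is invertible, so locally at any prime $\mathfrak{p}$ the ideal $(a,b) B_\mathfrak{p}$ is principal, whence $a/b$ or $b/a$ lies in $B_\mathfrak{p}$; this gives the valuation-ring property of $B_\mathfrak{p}$ directly without citing the general equivalence.
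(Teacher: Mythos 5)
Your proof is correct and follows essentially the same route as the paper's one-line argument, which simply cites Proposition~\ref{ideals} together with \cite{jensen}; you have merely unpacked which parts of Proposition~\ref{ideals} are used (parts (a) and (b)) and made explicit the standard Pr\"ufer-ring equivalence and the reduction from maximal to arbitrary primes. The alternative direct two-generator argument at the end is a nice optional elaboration but does not change the approach.
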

\begin{proof} This follows from  Proposition \ref{ideals} and \cite{jensen}. 
\end{proof}
Note that $B_{\pp}$ is a valuation ring even if $\pp$ is external, i.e.\ if $\pp$ is not finitely generated.

\subsection{Ideals of Internal Dedekind Rings and Filters on Lattices}
In the following, we want to investigate the ideal structure of internal Dedekind rings $B$, for example $B={^*A}$ for a standard Dedekind ring $A$.  There are three types of ideals of ${B}$:
\begin{enumerate}
\item {\em Standard} ideals ${^*\aaa} \subset {B}$ are extensions of ideals $\aaa \subset B$. They are finite products of standard maximal ideals.
\item {\em Internal} ideals $\bbb \subset {B}$ are hyperfinite products of internal maximal ideals. 
 There exist internal ideals which are not standard. 
\item {\em External} ideals $\ccc \subset {B}$. We will see below that there exist many non-internal ideals and external ideals possess a rich structure.
\end{enumerate}

Our aim is to give a classification of ideals in $B$ and to extend the results in
\cite{cherlindedekind}, \cite{cherlin} and \cite{klingen1975}. The
description uses lattices, ideals in lattices and filters.

For the remainder of this section we fix the following notation: {\em $B$ denotes an internal Dedekind ring and $\mathcal{M}$ is the set of internal maximal ideals of $B$. }

Since Pr\"ufer rings are coherent and finitely generated ideals coincide with internal ideals, we have an
isomorphism of lattices as explained in Section \ref{lattices}:

$$\mathpzc{Id}(B)\stackrel{\sim}{\longrightarrow}\mathpzc{Id}(\mathpzc{Id}_{fg}(B))=\mathpzc{Id}(^*\mathpzc{Id}(B))$$

One can also remove the zero ideal:

$$\mathpzc{Id}^{\times}(B)\stackrel{\sim}{\longrightarrow}\mathpzc{Id}(\mathpzc{Id}_{fg}^{\times}(B))=\mathpzc{Id}(^*\mathpzc{Id}^{\times}(B))$$

 \begin{definition} Let $R$ be an internal ring and $\aaa \subset R$ an ideal, which is not necessarily internal. 
 Then the $*$-radical of $\aaa$, denoted by $\srad(\aaa)$, is defined as the set of all $x \in R$ such that $x^n \in \aaa$ for some $n \in \nsn$.
 $\aaa$ is called {\em $*$-radical} if  $\srad(\aaa)=\aaa$.
 \end{definition}
 
 \begin{remark} Obviously, $\rad(\aaa) \subset \srad(\aaa)$. Maximal ideals are $*$-radical, but there exist radical ideals and even prime ideals which are not $*$-radical.   
 \end{remark}

Because the definition of the $*$-radical is internal we see

\begin{lemma}
The $*$-radical of an internal ideal is internal.
\end{lemma}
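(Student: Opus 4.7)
The plan is to apply the Internal Definition Principle (a standard consequence of the Transfer Principle together with countable saturation). Concretely, given an internal ring $R$ and an internal ideal $\aaa \subset R$, the set
$$ \srad(\aaa)=\{x\in R \ |\ \exists n\in\nsn\ :\ x^n \in \aaa\} $$
is described by an internal predicate applied to internal parameters, so it must be an internal subset of $R$. The quantifier runs over $\nsn$ (the internal natural numbers), $\aaa$ and $R$ appear only as internal sets, and the power operation $x\mapsto x^n$ is an internal function on $R\times\nsn$. Hence the defining formula is internal in the sense of the superstructure $\widehat{{^*S}}$.

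To make this concrete and also verify the result via the ultrapower picture outlined in the Remark, I would argue as follows. Write $R = \prod_{i\in I} R_i/\mathcal{F}$ and, since $\aaa$ is internal, $\aaa = \prod_{i\in I} \aaa_i/\mathcal{F}$ with each $\aaa_i \subset R_i$ an ideal on a set of indices in $\mathcal{F}$. For such indices form the standard radical $\rad(\aaa_i)=\{x\in R_i : \exists n\in\nn,\ x^n\in\aaa_i\}$, and set
$$ \bbb := \prod_{i\in I}\rad(\aaa_i)\,/\,\mathcal{F}, $$
which is internal by construction. I then claim $\bbb=\srad(\aaa)$. For the inclusion $\bbb\subset\srad(\aaa)$, if $x=[x_i]\in\bbb$ then for almost all $i$ there is $n_i\in\nn$ with $x_i^{n_i}\in\aaa_i$; the element $n=[n_i]\in\nsn$ then satisfies $x^n\in\aaa$. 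Conversely, if $x^n\in\aaa$ for some $n=[n_i]\in\nsn$, then $x_i^{n_i}\in\aaa_i$ almost always, so $x_i\in\rad(\aaa_i)$ almost always and $x\in\bbb$.

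The main (mild) obstacle is conceptual rather than technical: one must emphasise that the quantifier ``$\exists n \in \nsn$'' is an \emph{internal} quantifier, not an external one. A quantifier over $\nn$ (inside the formula defining $\rad$) would not transfer to an internal predicate and could produce an external set; in fact this is precisely why the ordinary radical $\rad(\aaa)$ of an internal ideal is typically \emph{not} internal, whereas $\srad(\aaa)$ is. Once this distinction is made, the lemma follows immediately from either the Internal Definition Principle or, equivalently, the ultrapower computation above.
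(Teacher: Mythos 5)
Your proposal is correct and follows essentially the same route as the paper, which justifies the lemma simply by observing that the defining condition $\exists n\in\nsn:\ x^n\in\aaa$ is an internal formula with internal parameters, so $\srad(\aaa)$ is internal by the Internal Definition Principle. Your supplementary ultrapower computation (identifying $\srad(\aaa)$ with the ultraproduct of the standard radicals $\rad(\aaa_i)$) is a valid concrete verification of the same fact and correctly highlights why $\rad(\aaa)$, defined with a quantifier over $\nn$, need not be internal.
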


The Lemma shows the following:
\begin{prop}\label{radid}
The $*$-radical ideals of $B$ correspond under the
isomorphism
$\mathpzc{Id}(B)\xrightarrow{\sim}\mathpzc{Id}(^*\mathpzc{Id}(B))$ to the
ideals which are generated by internal $*$-radical ideals.
\end{prop}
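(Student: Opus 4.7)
The plan is to prove the equivalence directly in both directions, exploiting the explicit form of the lattice isomorphism together with the preceding Lemma that $\srad$ preserves internality, plus the Dedekind-specific fact that finite sums of $*$-radical ideals remain $*$-radical.

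For the forward direction, suppose $\aaa \subset B$ is $*$-radical, and let $\mathpzc{A} = \{\bbb \in {^*\mathpzc{Id}(B)} \mid \bbb \subset \aaa\}$ be the corresponding lattice ideal. For any $\bbb \in \mathpzc{A}$, the Lemma gives that $\srad(\bbb)$ is an internal ideal; it is $*$-radical by construction, and $\srad(\bbb) \subset \srad(\aaa) = \aaa$ shows $\srad(\bbb) \in \mathpzc{A}$. Since $\bbb \subset \srad(\bbb)$, every element of $\mathpzc{A}$ is dominated by an internal $*$-radical element of $\mathpzc{A}$, so $\mathpzc{A}$ is generated (as a lattice ideal) by its internal $*$-radical members.

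For the converse, suppose $\mathpzc{A}$ is the lattice ideal generated by a family $\{\bbb_\alpha\}$ of internal $*$-radical ideals, and let $\aaa = \bigcup_{\bbb \in \mathpzc{A}} \bbb \subset B$ be the corresponding ideal. Take $x \in B$ with $x^n \in \aaa$ for some $n \in \nsn$. The principal ideal $(x^n)$ is internal, so $(x^n) \in \mathpzc{A}$, and by the definition of lattice-ideal generation we have $(x^n) \subset \bbb_{\alpha_1} + \cdots + \bbb_{\alpha_k}$ for finitely many generators. Denote this sum by $\bbb'$; it lies in $\mathpzc{A}$ since lattice ideals are closed under finite suprema. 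If $\bbb'$ is $*$-radical, then $x^n \in \bbb'$ forces $x \in \bbb' \subset \aaa$, finishing the proof.

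The crux, and the main obstacle, is thus to show that a finite sum of internal $*$-radical ideals is again $*$-radical. I would obtain this by transfer: in a standard Dedekind domain a $*$-radical ideal (which is just a radical ideal in the standard setting) factors as a product of distinct maximal ideals with exponents $\leq 1$; the sum of two such ideals coincides with the gcd, whose prime factorization has exponents equal to the minima of the factors' exponents and therefore is again $\leq 1$, hence $*$-radical. Transferring this statement from $A$ to $B$ by Proposition \ref{ideals}, the hyperfinite product description of internal ideals together with the min/max description of sums and intersections shows that a hyperfinite, and in particular a finite, sum of internal $*$-radical ideals of $B$ is $*$-radical. This completes the argument.
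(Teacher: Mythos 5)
Your proof is correct and fills in the details that the paper leaves implicit: the paper merely states ``The Lemma shows the following'' and gives no written argument, so both the forward direction (which is exactly the monotonicity of $\srad$ plus the Lemma that $\srad$ of an internal ideal is internal) and the converse are left to the reader. Your identification of the one nontrivial point in the converse --- that a finite sum of internal $*$-radical ideals is again $*$-radical --- is exactly right, and your verification via Proposition~\ref{ideals}(f) is the natural one: under the isomorphism with $({^*\bigoplus}_{\mathcal{M}} {^*\mathbb{N}})^{op}$, an internal ideal is $*$-radical iff its divisor has values in $\{0,1\}$, the sum of ideals is the pointwise minimum of divisors, and the minimum of two $\{0,1\}$-valued functions is again $\{0,1\}$-valued. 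One small stylistic remark: the transfer argument ``from $A$ to $B$'' reads as if $B = {^*A}$, whereas $B$ is allowed to be an arbitrary internal Dedekind ring; but since Proposition~\ref{ideals}(f) is stated and valid for arbitrary $B$, this is only a phrasing issue and the substance is sound.
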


\begin{prop}
There is a natural isomorphism of lattices
$$\mathpzc{Id}^{\times}(B) \stackrel{\sim}{\longrightarrow}\mathpzc{Filt}\left({^*\bigoplus}_{\mathcal{M}}
{^*\mathbb{N}}\right).$$
\end{prop}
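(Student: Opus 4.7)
The plan is to obtain the desired isomorphism by composing three isomorphisms that have already been established (or nearly so) in the excerpt. First, since $B$ is a Prüfer ring by Corollary \ref{pruefer} (hence coherent), and since by Proposition \ref{ideals}(a) the finitely generated ideals of $B$ coincide with the internal ideals, the general sublattice-of-an-ideal correspondence from Section \ref{lattices} applied to the nonzero ideals yields a lattice isomorphism
$$\mathpzc{Id}^{\times}(B) \stackrel{\sim}{\longrightarrow} \mathpzc{Id}\bigl({^*\mathpzc{Id}^{\times}}(B)\bigr),$$
sending a nonzero ideal $\aaa \subset B$ to the lattice ideal $\{\bbb \in {^*\mathpzc{Id}^{\times}}(B) : \bbb \subset \aaa\}$, with inverse $\mathpzc{A} \mapsto \bigcup_{\bbb \in \mathpzc{A}} \bbb$.

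Next, I invoke Proposition \ref{ideals}(f), which identifies the lattice $^*\mathpzc{Id}^{\times}(B)$ of nonzero internal ideals with $\bigl({^*\bigoplus}_{\mathcal{M}}{^*\mathbb{N}}\bigr)^{op}$ via $\bbb \mapsto (v_{\mm}(\bbb))_{\mm \in \mathcal{M}}$. Here the order reversal is essential: $\bbb_1 \subset \bbb_2$ iff $v_{\mm}(\bbb_1) \geq v_{\mm}(\bbb_2)$ for all $\mm \in \mathcal{M}$, so intersection of ideals (the meet in the ideal lattice) maps to componentwise maximum (the join in ${^*\bigoplus}_{\mathcal{M}}{^*\mathbb{N}}$), and sum maps to componentwise minimum, exactly as stated there. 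Composing, one obtains
$$\mathpzc{Id}^{\times}(B) \stackrel{\sim}{\longrightarrow} \mathpzc{Id}\Bigl(\bigl({^*\bigoplus}_{\mathcal{M}}{^*\mathbb{N}}\bigr)^{op}\Bigr).$$

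Finally, the observation in Section \ref{lattices} that ideals of a lattice $\mathcal{L}$ are the same as filters of the dual lattice $\mathcal{L}^{op}$ gives
$$\mathpzc{Id}\Bigl(\bigl({^*\bigoplus}_{\mathcal{M}}{^*\mathbb{N}}\bigr)^{op}\Bigr) = \mathpzc{Filt}\bigl({^*\bigoplus}_{\mathcal{M}}{^*\mathbb{N}}\bigr),$$
and chaining the three isomorphisms yields the proposition. Explicitly, the composite map sends a nonzero ideal $\aaa \subset B$ to the set of tuples $(v_{\mm}(\bbb))_{\mm}$ where $\bbb$ ranges over nonzero internal ideals contained in $\aaa$; this set is upward closed in ${^*\bigoplus}_{\mathcal{M}}{^*\mathbb{N}}$ (larger valuation vectors correspond to smaller ideals, which are still contained in $\aaa$) and closed under componentwise minimum (since the sum of two internal ideals contained in $\aaa$ is again contained in $\aaa$), hence is a filter.

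No step requires real work beyond bookkeeping; the only point deserving care is the order-reversal in step two, where one must consistently translate the ideal-theoretic operations (intersection, sum) on $^*\mathpzc{Id}^{\times}(B)$ into the lattice operations on ${^*\bigoplus}_{\mathcal{M}}{^*\mathbb{N}}$ in the right direction, so that lattice ideals on the $(\cdot)^{op}$ side become filters on the non-dual side. That verification is a direct unwinding of the definitions of filter and ideal from Section \ref{lattices}.
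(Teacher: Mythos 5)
Your proof follows exactly the same chain of isomorphisms as the paper: pass from $\mathpzc{Id}^{\times}(B)$ to lattice ideals of the nonzero finitely generated (= internal) ideals via the Section \ref{lattices} correspondence, then apply Proposition \ref{ideals}(f) to identify $\mathpzc{Id}_{fg}^{\times}(B)$ with $({^*\bigoplus}_{\mathcal{M}}{^*\mathbb{N}})^{op}$, and finally unwind the duality between ideals of $\mathcal{L}^{op}$ and filters of $\mathcal{L}$. Your version is simply a little more explicit about the order reversal and the explicit form of the composite map, but there is no substantive difference.
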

\begin{proof} $B$ is an internal Dedekind ring. Then Proposition \ref{ideals}(f) gives an
isomorphism of lattices:
$$\mathpzc{Id}_{fg}^{\times} (B) \stackrel{\sim}{\longrightarrow} \left({^*\bigoplus}_{\mathcal{M}}
{^*\mathbb{N}}\right)^{op}$$
General ideals correspond to ideals of finitely generated ideals (see Section \ref{lattices}). 
This implies the following isomorphisms: 

$$\mathpzc{Id}^{\times}(B) \stackrel{\sim}{\longrightarrow} 
\mathpzc{Id}(\mathpzc{Id}_{fg}^{\times}(B))  \stackrel{\sim}{\longrightarrow} \mathpzc{Id}\left(\left({^*\bigoplus}_{\mathcal{M}} {^*\mathbb{N}}\right)^{op}\right) 
\stackrel{\sim}{\longrightarrow} \mathpzc{Filt}\left({^*\bigoplus}_{\mathcal{M}}
{^*\mathbb{N}}\right)$$
\end{proof}

Under the above isomorphism, we call the image of 
a $*$-radical ideal a {\em $*$-radical filter}. Such a filter is generated by  elements in ${^*\bigoplus}_{\mathcal{M}} \{0,1\}$. 
Furthermore, every filter  $\mathcal{F} \subset {^*\bigoplus}_{\mathcal{M}}
{^*\mathbb{N}}$ has an associated $*$-radical filter $\srad(\mathcal{F})$ which can be constructed using the maps $Supp$ and $Ch$ (see Theorem \ref{prop-ideals} and the Definitions before that Theorem).

For the classification of $*$-radical ideals it is useful to consider the lattice
$$\mathcal{L}:= {^*\mathcal{P}}_{fin}(\mathcal{M})$$ 
of hyperfinite subsets of $\mathcal{M}$.

Instead of working with {\em hyperfinite} subsets one could also use the lattice
${^*\mathcal{P}}(\mathcal{M})$ of {\em internal} subsets. 
${^*\mathcal{P}}(\mathcal{M})$ and $\mathcal{L}$ are both distributive, relatively complemented lattices.
In the literature, both lattices are used to classify ideals (see \cite{klingen1975}, \cite{cherlin}, \cite{olberdingshapiro}). The following Proposition shows that these approaches are equivalent.

\begin{prop} A filter $\mathcal{F} \subset {^*\mathcal{P}}(\mathcal{M})$ is called {bounded} if it contains a hyperfinite set. There is a bijection between bounded proper filters on ${^*\mathcal{P}}(\mathcal{M})$ and proper filters on $\mathcal{L}$. The bounded ultrafilters on ${^*\mathcal{P}}(\mathcal{M})$ correspond to ultrafilters on $\mathcal{L}$.
\label{filters}
\end{prop}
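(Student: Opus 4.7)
The plan is to define two explicit inclusion-preserving maps $\Phi,\Psi$ between the two classes of filters and verify they are mutually inverse. Given a bounded proper filter $\mathcal{F}\subset{^*\mathcal{P}}(\mathcal{M})$, set $\Phi(\mathcal{F}):=\mathcal{F}\cap\mathcal{L}$. Conversely, given a proper filter $\mathcal{G}\subset\mathcal{L}$, let $\Psi(\mathcal{G})$ be its upward closure inside ${^*\mathcal{P}}(\mathcal{M})$, namely $\{S\in{^*\mathcal{P}}(\mathcal{M})\mid \exists\, T\in\mathcal{G},\ T\subset S\}$.

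First I would check well-definedness. For $\Phi(\mathcal{F})$: the intersection of two hyperfinite sets is again hyperfinite (by transfer from the finite case), and upward closure inherits to $\mathcal{L}$, so $\Phi(\mathcal{F})$ is closed under meets and upward closed in $\mathcal{L}$; it is nonempty precisely because $\mathcal{F}$ is bounded, and it is proper because $\emptyset\notin\mathcal{F}$. For $\Psi(\mathcal{G})$: if $T_i\in\mathcal{G}$ with $T_i\subset S_i$, then $T_1\cap T_2\in\mathcal{G}$ and $T_1\cap T_2\subset S_1\cap S_2$, giving closure under intersections; upward closure in ${^*\mathcal{P}}(\mathcal{M})$ is immediate; boundedness holds since any $T\in\mathcal{G}$ is a hyperfinite element; properness follows from $\emptyset\notin\mathcal{G}$.

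Next I would establish that $\Phi$ and $\Psi$ are mutually inverse. The equation $\Phi\circ\Psi=\mathrm{id}$ is the fact that $\mathcal{G}$ is already upward closed within $\mathcal{L}$. The essential point, and the place where boundedness is used, is $\Psi\circ\Phi=\mathrm{id}$. Given $S\in\mathcal{F}$, pick any hyperfinite $T'\in\mathcal{F}$ (available by boundedness) and put $T:=T'\cap S$; then $T\in\mathcal{F}$ because $\mathcal{F}$ is a filter, $T$ is internal and contained in the hyperfinite set $T'$ and so hyperfinite by transfer, and $T\subset S$, hence $S\in\Psi(\Phi(\mathcal{F}))$. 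The reverse inclusion is upward closure of $\mathcal{F}$. This step is the main obstacle: without boundedness one cannot manufacture a hyperfinite witness contained in an arbitrary $S\in\mathcal{F}$, and $\Phi$ ceases to be injective.

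Finally, for the ultrafilter assertion I would note that any proper filter containing a bounded filter is itself bounded, so the bounded ultrafilters on ${^*\mathcal{P}}(\mathcal{M})$ are precisely the maximal elements among bounded proper filters. Since $\Phi$ and $\Psi$ are inclusion-preserving mutual inverses, they restrict to a bijection between these maximal elements on one side and the maximal proper filters on $\mathcal{L}$, i.e.\ the ultrafilters on $\mathcal{L}$, on the other side.
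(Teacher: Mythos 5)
Your proof is correct and takes essentially the same route as the paper: both use intersection with $\mathcal{L}$ in one direction, upward closure in the other, and the key observation that for $S\in\mathcal{F}$ and hyperfinite $T'\in\mathcal{F}$, the set $T'\cap S$ is a hyperfinite member of $\mathcal{F}$ contained in $S$. Your treatment of the ultrafilter case is slightly more careful than the paper's one-line remark, since you explicitly note that a proper filter containing a bounded filter is itself bounded, so that maximal bounded proper filters coincide with bounded ultrafilters.
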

\begin{proof} Let $\mathcal{F} \subset {^*\mathcal{P}}(\mathcal{M})$ be  a bounded proper filter. Then $\mathcal{F}_{\mathcal{L}}=\mathcal{F} \cap \mathcal{L}$ is non-empty and obviously a filter on $\mathcal{L}$.  $\mathcal{F}_{\mathcal{L}}$ is proper since every internal set contains a hyperfinite set. Conversely, a proper filter $\mathcal{F}_{\mathcal{L}}$ on $\mathcal{L}$ yields a proper bounded filter on  ${^*\mathcal{P}}(\mathcal{M})$  by adding all internal supersets of sets in $\mathcal{F}_{\mathcal{L}}$. We show that the maps are inverse to each other. Let $\mathcal{F}$ be a bounded proper filter on ${^*\mathcal{P}}(\mathcal{M})$ and $F \in \mathcal{F}$. Then $H \in \mathcal{F}$ for some hyperfinite set $H$. Thus $F \cap H \in \mathcal{F}$ and $F \cap H \in \mathcal{F}_{\mathcal{L}}$. But this implies that $F$ is contained in the filter on ${^*\mathcal{P}}(\mathcal{M})$ that is associated to $\mathcal{F}_{\mathcal{L}}$. The converse concatenation obviously gives the identity map. The bijection preserves inclusions so that maximal filters on ${^*\mathcal{P}}(\mathcal{M})$ correspond to maximal filters on $\mathcal{L}$.
\end{proof}

\begin{example}
\begin{enumerate}
\item Let $H \subset \mathcal{M}$ be any hyperfinite non-empty set. The the set of internal (respectively hyperfinite) supersets of $H$ defines a principal filter 
$\mathcal{F}$ on ${^*\mathcal{P}}(\mathcal{M})$
(respectively on $\mathcal{L}$). $\mathcal{F}$ is an ultrafilter iff $H$ contains exactly one maximal ideal.
\item Let $H \subset \mathcal{M}$ be an infinite hyperfinite set. Then define the following filter $\mathcal{F}$ on $\mathcal{L}$: $F \in \mathcal{F}$ if there exists a finite set $H_0 \subset \mathcal{M}$ such that $H \setminus H_0 \subset F$. $\mathcal{F}$ is a free filter since it does not have a least element. Furthermore, $\mathcal{F}$ can be extended to a free ultrafilter on $\mathcal{L}$ and on ${^*\mathcal{P}}(\mathcal{M})$.
\end{enumerate}
\label{non-internal}
\end{example}

We study the relation between the lattices . There is 
are natural homomorphisms of lattices ${^*\bigoplus}_{\mathcal{M}} {^*\mathbb{N}}$ and $\mathcal{L}$.
$$supp:\ {^*\bigoplus}_{\mathcal{M}} {^*\mathbb{N}}\longrightarrow
\mathcal{L}$$
defined by $supp(f)=\{m\in\mathcal{M}\vert\ f(m)\neq 0\}$, 
and 
$$ch:\ \mathcal{L} \longrightarrow
{^*\bigoplus}_{\mathcal{M}} {^*\mathbb{N}} $$ 
where $ch(S)(m)= 1 $ if $m \in S$ and $0$ otherwise.

For all $S\in \mathcal{L}$ we have $supp(ch(S))=S$.
These maps induce lattice homomorphisms
$$Supp:\mathpzc{Filt}\left( {^*\bigoplus}_{\mathcal{M}} {^*\mathbb{N}} \right)\longrightarrow
\mathpzc{Filt}(\mathcal{L}) $$
defined by $Supp(\mathcal{F})=\{supp(f) \vert\ f \in \mathcal{F}\}$
and 
$$Ch:\mathpzc{Filt}(\mathcal{L}) \longrightarrow
 \mathpzc{Filt}\left( {^*\bigoplus}_{\mathcal{M}} {^*\mathbb{N}} \right)$$
defined by $Ch(\mathcal{S})(m):=\{f\in{^*\bigoplus}_{\mathcal{M}} {^*\mathbb{N}} \
\vert\  \exists S\in \mathcal{S}: ch(S)\leq f\}$
which maps proper filters to proper filters.
Note that for $\mathcal{S}\in
\mathpzc{Filt}(\mathcal{L})$  
the filter in $\mathpzc{Filt}( {^*\bigoplus}_{\mathcal{M}}
{^*\mathbb{N}})$ generated by all $ch(S)$ for $S\in\mathcal{S}$ is
  precisely $Ch(\mathcal{S})$.

The following Theorem describes the properties of the lattice homomorphisms $Supp$ and $Ch$ and gives the relation between prime filters and ultrafilters on the lattices ${^*\bigoplus}_{\mathcal{M}} {^*\mathbb{N}}$ and $\mathcal{L}$.

\begin{theorem}\label{prop-ideals}
Let $\mathcal{S}\in \mathpzc{Filt}(\mathcal{L})$ and $\mathcal{F}\in\mathpzc{Filt}({^*\bigoplus}_{\mathcal{M}} {^*\mathbb{N}}) $. Then
\begin{enumerate}
\item  $(Supp \circ Ch)(\mathcal{S})=\mathcal{S}$.
\item  $(Ch \circ
  Supp)(\mathcal{F}) =\srad(\mathcal{F} )$.
\item $Supp(\mathcal{F})$ is a prime filter if and only if $\mathcal{F}$ is a prime filter.
\item $Ch(\mathcal{S})$ is a prime filter if and only if $\mathcal{S}$ is a prime filter.
In this case, $\mathcal{S}$ and $Ch(\mathcal{S})$ are maximal filters.
\item $\mathcal{F}$ is a prime filter if and only if $\mathcal{F}$ is contained in exactly one maximal filter.
In this case, the associated ultrafilter is $\srad(\mathcal{F})$.
\item $Supp$ and $Ch$ provide a one-to-one correspondence between the maximal filters on $\mathcal{L}$ and the maximal filters on 
${^*\bigoplus}_{\mathcal{M}} {^*\mathbb{N}}$.
\end{enumerate}
\end{theorem}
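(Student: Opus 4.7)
The plan is to establish (a)--(f) in the order (a), (b), (d), (c), (e), (f), with each step bootstrapping from earlier ones. Part (a) is immediate from $supp(ch(S))=S$ plus upward-closure of $\mathcal{S}$: if $f\in Ch(\mathcal{S})$ satisfies $ch(S)\le f$ for some $S\in\mathcal{S}$, then $supp(f)\supseteq S$ lies in $\mathcal{S}$. For (b), I interpret the $*$-radical intrinsically via the lattice isomorphism $\mathpzc{Id}^{\times}(B)\xrightarrow{\sim}\mathpzc{Filt}({^*\bigoplus}_{\mathcal{M}}{^*\mathbb{N}})$ as $\srad(\mathcal{F})=\{f\ :\ nf\in\mathcal{F}\text{ for some }n\in{^*\mathbb{N}}\}$. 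The inclusion $Ch(Supp(\mathcal{F}))\subseteq\srad(\mathcal{F})$ then follows by choosing a witness $g\in\mathcal{F}$ with $ch(supp(g))\le f$ and setting $n:=\max_{m}g(m)\in{^*\mathbb{N}}$ (a hyperfinite maximum of hyperfinite values), since then $nf\ge g\in\mathcal{F}$; the reverse inclusion uses $supp(nf)=supp(f)$.

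For (d), both directions rest on the lattice-homomorphism identities $supp(f\vee g)=supp(f)\cup supp(g)$ and $ch(S_1\cup S_2)=ch(S_1)\vee ch(S_2)$, combined with primality on the appropriate lattice. Maximality of $\mathcal{S}$ is the characterization cited in Section~\ref{lattices} that prime filters on a distributive relatively complemented lattice are maximal. Maximality of $Ch(\mathcal{S})$ I would derive by noting that any proper $\mathcal{U}\supseteq Ch(\mathcal{S})$ yields a proper filter $Supp(\mathcal{U})\supseteq\mathcal{S}$ on $\mathcal{L}$, hence $Supp(\mathcal{U})=\mathcal{S}$ by maximality; then for any $f\in\mathcal{U}$, $supp(f)\in\mathcal{S}$ forces $f\in Ch(\mathcal{S})$.

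For (c), the direction $\mathcal{F}$ prime $\Rightarrow Supp(\mathcal{F})$ prime is the standard support-split: if $supp(f)=S\cup T$ with $f\in\mathcal{F}$, write $f=f_1\vee f_2$ pointwise with $supp(f_1)\subseteq S$ and $supp(f_2)\subseteq T\setminus S$, and apply primality. The converse is the main obstacle. Given $f\vee g\in\mathcal{F}$ and $Supp(\mathcal{F})$ prime, I partition $supp(f\vee g)$ into the two \emph{internal} (hence hyperfinite) subsets $T_f=\{m\ :\ f(m)\ge g(m)\}$ and $T_g=\{m\ :\ g(m)>f(m)\}$, both in $\mathcal{L}$; primality of $Supp(\mathcal{F})$ places one, say $T_f$, into $Supp(\mathcal{F})$. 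Choosing $h\in\mathcal{F}$ with $supp(h)=T_f$, the element $h\wedge(f\vee g)\in\mathcal{F}$ vanishes outside $T_f$ and equals $\min(h,f)\le f$ on $T_f$ (because $(f\vee g)|_{T_f}=f|_{T_f}$ there); upward closure of $\mathcal{F}$ gives $f\in\mathcal{F}$. This is the technical heart of the theorem because it requires simultaneously the internality of the dominance partition and the relative complementation of $\mathcal{L}$.

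For (e), combining (b), (c), (d): if $\mathcal{F}$ is prime, then $Supp(\mathcal{F})$ is maximal on $\mathcal{L}$, so any maximal $\mathcal{U}\supseteq\mathcal{F}$ has $Supp(\mathcal{U})=Supp(\mathcal{F})$, hence $\mathcal{U}=\srad(\mathcal{U})=Ch(Supp(\mathcal{F}))=\srad(\mathcal{F})$ (using that $\mathcal{U}$ maximal equals its own $*$-radical). Conversely, if $\mathcal{F}$ is not prime, (c) makes $Supp(\mathcal{F})$ non-maximal on $\mathcal{L}$; the Birkhoff separation theorem in the distributive lattice $\mathcal{L}$ then produces two distinct maximal extensions, and $Ch$ pushes them to two distinct maximal filters on ${^*\bigoplus}_{\mathcal{M}}{^*\mathbb{N}}$, each containing $\srad(\mathcal{F})\supseteq\mathcal{F}$. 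Finally, (f) is a corollary: by (d) and by (c) together with relative complementation of $\mathcal{L}$, both $Ch$ and $Supp$ preserve maximality; $Supp\circ Ch=\mathrm{id}$ is (a), and by (b) the composition $Ch\circ Supp=\srad$ restricts to the identity on maximal filters.
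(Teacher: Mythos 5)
Your proposal follows the paper's own route in all the essential parts. Part (a) is definitional in both. For (c), your two directions are the paper's: the restriction split for the forward implication, and for the converse the dominance partition into $\{m \mid f(m)\geq g(m)\}$ and $\{m\mid g(m)>f(m)\}$ — the paper packages this as $f'_1,f'_2$ with disjoint supports and invokes distributivity of ${^*\bigoplus}_{\mathcal{M}} {^*\mathbb{N}}$, while you meet $f\vee g$ directly with the witness $h$; these are the same computation, and your remark that relative complementation is needed here is not accurate (it enters only through the maximality statements in (d)--(f)). Your (d), proved directly from $supp(f\vee g)=supp(f)\cup supp(g)$ and $ch(S_1\cup S_2)=ch(S_1)\vee ch(S_2)$ rather than from (a)+(c) as in the paper, is a harmless reordering, and your argument that any proper filter containing $Ch(\mathcal{S})$ has support $\mathcal{S}$ is the paper's ``largest filter with given support'' remark made explicit. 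Parts (e) and (f) coincide with the paper's proofs, including the appeal to the prime-filter separation theorem in the distributive lattice $\mathcal{L}$ (the result cited from Gr\"atzer in the paper); note only that the engine there is non-primality of $Supp(\mathcal{F})$ — the two witnesses $S_1\cup S_2\in Supp(\mathcal{F})$, $S_1,S_2\notin Supp(\mathcal{F})$ — not non-maximality as such, which is exactly what (c) delivers, so your phrasing should be routed through non-primality.

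The one genuine deviation is (b), and it carries a gap. The paper derives (b) from Proposition \ref{radid}, which characterises the filters corresponding to $*$-radical ideals, so that $Ch(Supp(\mathcal{F}))$ is identified as the smallest such filter containing $\mathcal{F}$. You instead posit the divisor-side formula $\srad(\mathcal{F})=\{f \mid nf\in\mathcal{F}\text{ for some }n\in\nsn\}$ and verify (b) against it; the verification (hyperfinite maximum of $g$, and $supp(nf)=supp(f)$) is fine, but the formula is not the definition: $\srad$ is defined ring-theoretically on ideals of $B$ and only then transported through $\mathpzc{Id}^{\times}(B)\cong\mathpzc{Filt}({^*\bigoplus}_{\mathcal{M}} {^*\mathbb{N}})$. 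So you owe the identification: for an internal ideal $\bbb=(x,y)$ and an arbitrary (possibly external) ideal $\aaa$ of $B$, one has $\bbb\subseteq\srad(\aaa)$ if and only if $\bbb^{n}\subseteq\aaa$ for some $n\in\nsn$. The ``if'' direction is immediate; the ``only if'' direction needs the internal binomial/monomial grouping (every element of $(x,y)^{n+m}$ can be written as $x^{n}c+y^{m}d$ with $c,d\in B$), which is what keeps the relevant hyperfinite sums inside the external ideal $\aaa$, and it uses that internal ideals of $B$ are generated by two elements. Without this bridge, your (b) — and hence the identification in (e) of the unique ultrafilter above a prime filter as $\srad(\mathcal{F})$ — is proved only for your ad hoc notion of radical. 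The gap is short and fixable, but it should be closed.
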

\begin{proof}
(a) follows from the definition.

We have $f \in {^*Rad} (\mathcal{F})$ iff there exists $n \in \nsn$ such that $n \cdot f \in \mathcal{F}$. For any $f \in {^*Rad} (\mathcal{F})$, 
 $ch(supp(n \cdot f))=ch(supp(f)) \leq f$ holds for all $n \in \nsn$. Furthermore, for any $f \in \mathcal{F}$ there exists $n \in \nsn$ such that $f \leq n \cdot ch(supp(f))$. This yields (b).
 
 (c) Suppose $\mathcal{F}$ is a prime filter and $S_1 \cup S_2 =  supp(f)$ with $S_1, S_2 \subset \mathcal{L}$ and $f \in \mathcal{F}$. Define $f_1, f_2 \in {^*\bigoplus}_{\mathcal{M}} {^*\mathbb{N}}$ by $f_i(m)=f(m)$ if $m \in S_i$ and $f_i(m)=0$ otherwise, where $i=1,2$. Then $supp(f_i)=S_i$ and $f_1 \vee f_2 = f$. Since $\mathcal{F}$ is prime, we have $f_1 \in \mathcal{F}$ or $f_2 \in \mathcal{F}$ and hence $S_1 \in Supp(\mathcal{F})$ or $S_2 \in Supp(\mathcal{F})$. 

Conversely, assume that $Supp(\mathcal{F})$ is a prime filter and $f_1 \vee f_2 = f \in \mathcal{F}$. Note that $f_1 \vee f_2$ is defined by taking the maximum value at each $m \in \mathcal{M}$. Define $f'_1$ and $f'_2$ by $f'_1(m)=f_1(m)$ if $f_1(m)>f_2(m)$ and $f'_1(m)=0$ otherwise and set 
$f'_2(m)=f_2(m)$ if $f_1(m) \leq f_2(m)$ and $f'_2(m)=0$ otherwise. Note that $f'_1 \leq f_1$ and $f'_2 \leq f_2$.
We have $f_1 \vee f_2 = f'_1 \vee f'_2 = f$ and hence $supp(f'_1) \cup supp(f'_2) = supp(f)$.
This implies $supp(f'_i) \in Supp(\mathcal{F})$ for $i=1$ or $i=2$. Assume that $g \in \mathcal{F}$ with $supp(g)=supp(f'_1)$. We want to show
that $f'_1 \in \mathcal{F}$ and therefore $f_1 \in \mathcal{F}$. To this end we use the fact that 
$$g \wedge (f'_1 \vee f'_2) = (g \wedge f'_1) \vee (g \wedge f'_2) \in \mathcal{F}$$
which follows from our assumption and the distributivity of the lattice ${^*\bigoplus}_{\mathcal{M}} {^*\mathbb{N}}$.
By construction, $supp(g) \cap supp(f'_2) = supp(f'_1) \cap supp(f'_2) = \varnothing$. Hence $g \wedge f'_2$ is the zero function
which implies $g \wedge f'_1 \in \mathcal{F}$. Since $g \wedge f'_1 \leq f'_1 \leq f_1$ we obtain $f_1 \in \mathcal{F}$ which concludes the proof of (c).

The first part of (d) follows from (a) and (c). If $\mathcal{S}$ is prime then it is maximal since $\mathcal{L}$ is a relatively complemented lattice.
By construction, $Ch(\mathcal{S})$ is the largest filter in ${^*\bigoplus}_{\mathcal{M}} {^*\mathbb{N}}$ with support equal to $\mathcal{S}$. Thus  $Ch(\mathcal{S})$ must be maximal and the second part of (d) follows.

(e) If $\mathcal{F}$ is prime then $\mathcal{F}$ is contained in the maximal filter $Ch(Supp(\mathcal{F}))=\srad(\mathcal{F})$. Any other maximal filter $\mathcal{G}$ which contains $\mathcal{F}$ must be $*$-radical and satisfy $Supp(\mathcal{F})=Supp(\mathcal{G})$ since $Supp(\mathcal{F})$ is maximal.  From (b) we obtain $\mathcal{F}=\mathcal{G}$. \\
Conversely, if $\mathcal{F}$ is not prime then by (c) $\mathcal{S}=Supp(\mathcal{F})$ is not prime either. So there
exist $S_1, S_2 \in \mathcal{S}$ such that $S_1 \cup S_2 \in \mathcal{S}$, but $S_1, S_2 \notin \mathcal{S}$. Then there exist prime (i.e. maximal) filters $\mathcal{S}_1 \supset \mathcal{S}$ and $\mathcal{S}_2 \supset \mathcal{S}$ such that $S_2 \notin \mathcal{S}_1$ and $S_1 \notin \mathcal{S}_2$ (see \cite{gratzer2011lattice} Corollary 116). Since $\mathcal{S}_1$ and $\mathcal{S}_2$ are prime filters that contain $S_1 \cup S_2$, one has $S_1 \in \mathcal{S}_1$ and $S_2 \in \mathcal{S}_2$. Hence $\mathcal{S}$ is contained in two different 
maximal filters. We conclude that $\mathcal{F}$ is contained in different maximal filters $Ch(\mathcal{S}_1)$ and $Ch(\mathcal{S}_2)$. This shows (e).

(f) Let $\mathcal{F}$ be a maximal filter on ${^*\bigoplus}_{\mathcal{M}} {^*\mathbb{N}}$. Then $Supp(\mathcal{F})$ is maximal by (c) and
$Ch(Supp(\mathcal{F}))=\mathcal{F}$ by (b). Conversely, let $\mathcal{S}$ be maximal filter on $\mathcal{L}$. It follows from (c) that
$Ch(\mathcal{S})$ is maximal and $Supp(Ch(\mathcal{S}))=\mathcal{S}$ by (a). This shows that $Supp$ and $Ch$ are inverse maps on the maximal filters. 
\end{proof}

\begin{cor}
Every nonzero prime ideal of $B$ is contained in exactly one maximal ideal. 
\end{cor}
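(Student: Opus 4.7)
The plan is to reduce the statement to Theorem \ref{prop-ideals}(e) via the chain of lattice isomorphisms $\mathpzc{Id}^{\times}(B) \stackrel{\sim}{\longrightarrow} \mathpzc{Id}(\mathpzc{Id}_{fg}^{\times}(B)) \stackrel{\sim}{\longrightarrow} \mathpzc{Filt}({^*\bigoplus}_{\mathcal{M}} {^*\mathbb{N}})$ built up in this section. Let $\pp \subset B$ be a nonzero prime ideal and let $\mathcal{F}_{\pp}$ denote its image under this chain. The first step is to verify that $\mathcal{F}_{\pp}$ is a prime filter. As recorded in Section \ref{lattices}, primality of $\pp$ as a ring ideal already implies primality as a lattice ideal in $\mathpzc{Id}_{fg}^{\times}(B)$, since $\aaa_1 \cap \aaa_2 \subset \pp$ forces $\aaa_1 \cdot \aaa_2 \subset \pp$ and hence $\aaa_1 \subset \pp$ or $\aaa_2 \subset \pp$. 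Passing to the dual lattice via $\mathpzc{Id}_{fg}^{\times}(B) \cong ({^*\bigoplus}_{\mathcal{M}} {^*\mathbb{N}})^{op}$ then turns this into the assertion that $\mathcal{F}_{\pp}$ is a prime filter on ${^*\bigoplus}_{\mathcal{M}} {^*\mathbb{N}}$.

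The second step is the direct application of Theorem \ref{prop-ideals}(e) to $\mathcal{F}_{\pp}$: it is contained in exactly one maximal filter, namely $\srad(\mathcal{F}_{\pp}) = (Ch \circ Supp)(\mathcal{F}_{\pp})$. The third step is to pull this uniqueness back through the lattice isomorphisms. Maximal filters on ${^*\bigoplus}_{\mathcal{M}} {^*\mathbb{N}}$ correspond to maximal ideals of the lattice $\mathpzc{Id}_{fg}^{\times}(B)$, and these in turn are in one-to-one correspondence with the maximal ring ideals of $B$, as recalled in Section \ref{lattices}. Since the isomorphisms involved all preserve inclusions, a maximal ideal of $B$ contains $\pp$ precisely when the corresponding maximal filter contains $\mathcal{F}_{\pp}$, and uniqueness on the filter side immediately transfers to uniqueness on the ring side.

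I do not expect any substantial obstacle, since the heavy lifting has already been done in Theorem \ref{prop-ideals}(e). The only real point requiring care is the bookkeeping between the ring-theoretic and lattice-theoretic notions, in particular that only the forward implication \emph{prime ring ideal $\Rightarrow$ prime lattice ideal} is needed here (the converse can fail, as the example $R=\zz$ in Section \ref{lattices} shows), and that the identification $\mathpzc{Id}_{fg}^{\times}(B) \cong ({^*\bigoplus}_{\mathcal{M}} {^*\mathbb{N}})^{op}$ is order-reversing, so that lattice ideals in $\mathpzc{Id}_{fg}^{\times}(B)$ correspond to filters, not ideals, on the right-hand side.
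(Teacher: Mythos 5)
Your proposal is correct and follows exactly the paper's route: pass from the nonzero prime ideal of $B$ to the corresponding prime filter on ${^*\bigoplus}_{\mathcal{M}} {^*\mathbb{N}}$ and invoke Theorem \ref{prop-ideals}(e). You simply spell out the intermediate lattice-isomorphism bookkeeping that the paper leaves implicit, with the right care about the one-directional implication from prime ring ideals to prime lattice ideals.
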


\begin{proof}
A nonzero prime ideal in $B$ yields a prime filter in ${^*\bigoplus}_{\mathcal{M}} {^*\mathbb{N}}$ which is contained in a unique maximal filter by Theorem \ref{prop-ideals}. 
\end{proof}

\begin{remark} Prime ideals in $B$ yield a prime filter in ${^*\bigoplus}_{\mathcal{M}} {^*\mathbb{N}}$, but prime filters do not necessarily correspond to a  {\em prime} ideal in $B$ (see Section \ref{lattices}).\hfill$\lozenge$
\end{remark}

The following statement describes general filters as intersections of prime filters.

\begin{prop} Let $\mathcal{F}$ be a filter on ${^*\bigoplus}_{\mathcal{M}} {^*\mathbb{N}}$. Then $\mathcal{F}$ is the intersection of the prime filters containing $\mathcal{F}$.
\end{prop}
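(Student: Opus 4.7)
The plan is to establish the two inclusions separately. The inclusion $\mathcal{F}\subset\bigcap\{\mathcal{P}\colon \mathcal{P}\text{ prime},\ \mathcal{F}\subset\mathcal{P}\}$ is immediate from the definition. For the reverse inclusion, I would show the contrapositive in its sharp form: for every $f\in{^*\bigoplus}_{\mathcal{M}}{^*\mathbb{N}}$ with $f\notin\mathcal{F}$, there exists a prime filter $\mathcal{P}\supset\mathcal{F}$ with $f\notin\mathcal{P}$. This is the classical Birkhoff--Stone prime-filter separation theorem for distributive lattices, and one could simply invoke \cite{gratzer2011lattice}; but since the ingredients are cheap here I would spell it out.

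The preparatory step is to observe that ${^*\bigoplus}_{\mathcal{M}}{^*\mathbb{N}}$ is a \emph{distributive} lattice. Pointwise $\vee$ and $\wedge$ on functions $\mathcal{M}\to{^*\mathbb{N}}$ are given by coordinatewise $\max$ and $\min$ in the totally ordered semiring ${^*\mathbb{N}}$ (by transfer of the total order on $\mathbb{N}$), and a totally ordered set is a distributive lattice; pointwise lattice operations then inherit distributivity. This is the property already exploited in the proof of Theorem \ref{prop-ideals}(c).

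With distributivity in hand, fix $f\notin\mathcal{F}$ and apply Zorn's lemma to the nonempty poset of filters containing $\mathcal{F}$ but not containing $f$, ordered by inclusion; chains have unions as upper bounds. Let $\mathcal{P}$ be a maximal element. I claim $\mathcal{P}$ is prime. Assuming $g\vee h\in\mathcal{P}$ with $g,h\notin\mathcal{P}$, the filter generated by $\mathcal{P}\cup\{g\}$, namely $\{x\colon x\geq p\wedge g\text{ for some }p\in\mathcal{P}\}$, strictly contains $\mathcal{P}$, so by maximality it contains $f$. Hence $f\geq p_1\wedge g$ for some $p_1\in\mathcal{P}$; symmetrically $f\geq p_2\wedge h$ for some $p_2\in\mathcal{P}$. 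Setting $p=p_1\wedge p_2\in\mathcal{P}$ and using distributivity,
$$f\geq (p\wedge g)\vee(p\wedge h)=p\wedge(g\vee h).$$
Since $p$ and $g\vee h$ both lie in $\mathcal{P}$, so does $p\wedge(g\vee h)$; upward closure of $\mathcal{P}$ then forces $f\in\mathcal{P}$, contradicting the construction.

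The main obstacle is really just the distributivity calculation inside the Zorn argument, which is where all the work is concentrated; once the lattice is known to be distributive the rest is bookkeeping. A secondary point worth watching is to ensure the filter generated by $\mathcal{P}\cup\{g\}$ is described correctly (meets from $\mathcal{P}$ with $g$, closed upward) so that the witness $p_1\wedge g\leq f$ is produced with $p_1\in\mathcal{P}$ rather than in some larger set.
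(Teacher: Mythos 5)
Your proposal is correct and rests on the same underlying fact the paper uses: the prime-filter separation theorem for distributive lattices, which the paper simply cites from \cite{gratzer1958ideal} without proof. You supply the standard self-contained Zorn's lemma argument (together with the observation that the lattice is distributive because the order on ${^*\mathbb{N}}$ is total and the operations are pointwise), which is a valid expansion of the same route rather than a different one.
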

\begin{proof} This holds true for every distributive lattice, see \cite{gratzer1958ideal}.
\end{proof}

\begin{remark}
An intersection over a subset of prime filters containing $\mathcal{F}$ can give the same filter $\mathcal{F}$. Thus the above representation as an intersection of prime filters is not unique.\hfill$\lozenge$
\end{remark}

The above Proposition  reduces the classification of ideals in $B$ to prime filters on ${^*\bigoplus}_{\mathcal{M}} {^*\mathbb{N}}$. The corresponding ideals are those which are contained in only one maximal ideal.

\begin{cor} Every ideal in $B$ is the intersection of ideals which are contained in exactly one maximal ideal of $B$.
\end{cor}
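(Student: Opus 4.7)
The plan is to transport the statement of the preceding Proposition through the lattice isomorphism $\mathpzc{Id}^{\times}(B) \xrightarrow{\sim} \mathpzc{Filt}({}^*\bigoplus_{\mathcal{M}} {}^*\mathbb{N})$ and then conclude by invoking Theorem~\ref{prop-ideals}(e). So let $\aaa \subset B$ be a nonzero ideal with associated filter $\mathcal{F}$. By the preceding Proposition one has $\mathcal{F} = \bigcap_i \mathcal{F}_i$, where the $\mathcal{F}_i$ range over all prime filters on ${}^*\bigoplus_{\mathcal{M}} {}^*\mathbb{N}$ containing $\mathcal{F}$.

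Since the displayed isomorphism is order-preserving and meets on both sides are computed as honest set-theoretic intersection---the ideal $\aaa$ is sent to the set of finitely generated ideals contained in $\aaa$, and intersection of ideals corresponds directly to intersection of these sets of generators---the equality above pulls back to $\aaa = \bigcap_i \aaa_i$, where each $\aaa_i \subset B$ is the ideal corresponding to $\mathcal{F}_i$. Theorem~\ref{prop-ideals}(e) tells us that every such prime filter $\mathcal{F}_i$ is contained in exactly one maximal filter; part~(f) of the same Theorem, combined with the lattice isomorphism, identifies the maximal filters of ${}^*\bigoplus_{\mathcal{M}} {}^*\mathbb{N}$ bijectively and inclusion-preservingly with the maximal ideals of $B$. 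It follows that each $\aaa_i$ is contained in exactly one maximal ideal of $B$, which yields the required decomposition.

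The one genuine point requiring care is that the lattice isomorphism respects \emph{arbitrary} meets, not just finite ones; but this is essentially automatic, since both operations are realized as literal set-theoretic intersection. For the zero ideal, which lies outside $\mathpzc{Id}^{\times}(B)$ and thus outside the scope of the isomorphism, one argues separately: for each $\mm \in \mathcal{M}$ the ideal $\bigcap_{n \in \mathbb{N}} \mm^n$ is contained in $\mm$ only, and intersecting these over all $\mm \in \mathcal{M}$ yields $(0)$, since already $\bigcap_{\mm \in \mathcal{M}} \mm = (0)$ in $B$ (any nonzero element has only hyperfinitely many prime divisors while $\mathcal{M}$ is not hyperfinite). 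Finally, I would flag---consistently with the Remark following Theorem~\ref{prop-ideals}---that the ideals $\aaa_i$ produced by this construction need not themselves be prime in the ring-theoretic sense; what matters for the Corollary is only that each $\aaa_i$ sits inside a unique maximal ideal of $B$.
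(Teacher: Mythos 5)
Your proof follows exactly the route the paper intends: transport through the isomorphism $\mathpzc{Id}^{\times}(B) \cong \mathpzc{Filt}({^*\bigoplus}_{\mathcal{M}} {^*\nn})$, apply the preceding Proposition to write the filter as an intersection of prime filters, and invoke Theorem~\ref{prop-ideals}(e),(f) together with the order-compatibility of the correspondence to conclude that the resulting ideals sit in a unique maximal ideal. Your check that the correspondence respects arbitrary meets (both being honest set-theoretic intersection) and your caveat that the $\aaa_i$ need not be prime in $B$ are both correct and well placed.

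One small caveat on the zero ideal, which indeed falls outside the displayed isomorphism and which the paper does not address: your claim that $\bigcap_{\mm\in\mathcal{M}}\mm=(0)$ relies on $\mathcal{M}$ not being hyperfinite, which can fail for a general internal Dedekind ring (e.g.\ the enlargement of a semilocal Dedekind domain, where the Jacobson radical is nonzero). A patch that works uniformly is to fix a single internal maximal ideal $\mm$ and observe that $(0)=\bigcap_{N\in\nsn}\mm^{N}$: each $\mm^{N}$ is an internal ideal whose only maximal overideal is $\mm$, and any nonzero $a$ satisfies $v_{\mm}(a)<v_{\mm}(a)+1$, hence $a\notin\mm^{v_{\mm}(a)+1}$.
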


 Theorem  \ref{prop-ideals} shows that $*$-radical and maximal ideals of $B$ can classified using the lattice $\mathcal{L}$.

\begin{cor} 
\begin{enumerate}
\item The maps $Supp$ and $Ch$ induce a one-to-one correspondence between proper nonzero $*$-radical ideals of $B$ and proper
 filters on $\mathcal{L}$. Internal ideals correspond to principal filters
 on $\mathcal{L}$.
 \item The maps $Supp$ and $Ch$ induce a one-to-one correspondence between maximal ideals of
 $B$ and ultrafilters on the lattice $\mathcal{L}$. The principal
ultrafilters correspond to internal maximal ideals, and free
ultrafilters yield external maximal ideals.

\end{enumerate}
\label{class-max-ideals}
\end{cor}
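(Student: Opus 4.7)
The plan is to combine the isomorphism $\mathpzc{Id}^{\times}(B)\xrightarrow{\sim}\mathpzc{Filt}({^*\bigoplus}_{\mathcal{M}}{^*\mathbb{N}})$ of the preceding Proposition with the analysis of $Supp$ and $Ch$ carried out in Theorem \ref{prop-ideals}. Throughout, ``the filter associated to an ideal $\aaa$'' will mean its image under that isomorphism, so that the maps $Supp$ and $Ch$ between ideals and filters on $\mathcal{L}$ are understood via composition.

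For part (a), I would first translate $*$-radicality through the isomorphism: an ideal $\aaa\in\mathpzc{Id}^{\times}(B)$ is $*$-radical if and only if its associated filter $\mathcal{F}\subset{^*\bigoplus}_{\mathcal{M}}{^*\mathbb{N}}$ satisfies $\srad(\mathcal{F})=\mathcal{F}$; this is essentially Proposition \ref{radid}. By Theorem \ref{prop-ideals}(b), this is equivalent to $\mathcal{F}$ lying in the image of $Ch$. Combining this with Theorem \ref{prop-ideals}(a), the maps $Supp$ and $Ch$ restrict to mutually inverse bijections between $*$-radical filters on ${^*\bigoplus}_{\mathcal{M}}{^*\mathbb{N}}$ and proper filters on $\mathcal{L}$, and these are precisely our correspondences on $*$-radical ideals. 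For the claim about internal ideals I would use Proposition \ref{ideals}(f): an internal ideal corresponds to a single element $f\in{^*\bigoplus}_{\mathcal{M}}{^*\mathbb{N}}$, hence its associated filter is principal, and $Supp$ sends this to the principal filter on $\mathcal{L}$ with least element $supp(f)$. Conversely, a principal filter on $\mathcal{L}$ with least element $S$ is sent by $Ch$ to the principal filter generated by $ch(S)$, corresponding to the internal $*$-radical ideal $\bigcap_{\mm\in S}\mm$.

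For part (b) I would note that every maximal ideal of $B$ is $*$-radical, so it corresponds under (a) to some proper filter on $\mathcal{L}$. Since the isomorphism $\mathpzc{Id}^{\times}(B)\xrightarrow{\sim}\mathpzc{Filt}({^*\bigoplus}_{\mathcal{M}}{^*\mathbb{N}})$ is order-preserving, maximal ideals of $B$ match maximal elements among proper filters on ${^*\bigoplus}_{\mathcal{M}}{^*\mathbb{N}}$, and by Theorem \ref{prop-ideals}(f) these correspond bijectively via $Supp$ and $Ch$ to ultrafilters on $\mathcal{L}$. For the principal/free dichotomy I would exploit that $\mathcal{L}={^*\mathcal{P}^{fin}}(\mathcal{M})$ is a relatively complemented distributive lattice whose atoms are the singletons $\{\mm\}$: a principal ultrafilter on $\mathcal{L}$ must be generated by such an atom, and under $Ch$ this pulls back to the principal filter generated by $ch(\{\mm\})$, corresponding to the internal maximal ideal $\mm$. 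Conversely, if an ultrafilter $\mathcal{S}$ on $\mathcal{L}$ is free, then $Ch(\mathcal{S})$ cannot be principal (otherwise $\mathcal{S}=Supp(Ch(\mathcal{S}))$ would inherit a least element), so the corresponding maximal ideal is not finitely generated, i.e.\ not internal, hence external. The main bookkeeping obstacle is simply keeping straight that maximality, $*$-radicality and principality are each preserved under the two bijections; once this is checked, everything reduces to Theorem \ref{prop-ideals}.
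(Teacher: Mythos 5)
Your argument is correct and follows exactly the route the paper intends: the corollary is stated as an immediate consequence of the isomorphism $\mathpzc{Id}^{\times}(B)\cong\mathpzc{Filt}({^*\bigoplus}_{\mathcal{M}}{^*\mathbb{N}})$ together with Proposition \ref{radid} and Theorem \ref{prop-ideals}(a),(b),(f), which is precisely what you assemble, and your handling of the principal/free dichotomy matches the paper's Example \ref{non-internal} and the explicit bijection given in the Remark following the corollary. No gaps worth flagging.
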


\begin{remark} We explicitly describe the bijections in Corollary \ref{class-max-ideals}: let $\aaa$ be a proper nonzero $*$-radical ideal (part a) or a maximal ideal (part b). For $a \in \aaa$, $a \neq 0$ let 
$$V(a) = \{ \pp \in \mathcal{M} \ |\ a \in \pp \}  \in \mathcal{L} . $$ 
Then $\{ V(a) \ |\ a \in \aaa,\ a \neq 0\}$ is a subbase and generates a filter $\mathcal{F}$ on $\mathcal{L}$. 

Conversely, if $\mathcal{F}$ is a given proper filter on $\mathcal{L}$ then set
$$ \aaa = \{ a \in B \ |\ V(a) \in \mathcal{F} \text{ or } a=0 \}.$$
$\aaa$ is maximal if and only if $\mathcal{F}$ is an ultrafilter.\hfill$\lozenge$
\end{remark}

We have seen above that maximal ideals in $B$ are in one-to-one correspondence to
maximal filters on $\mathcal{L}$. Internal maximal ideals correspond to principal and external ideals to free ultrafilters. The following Remark shows that the set of external ideals is quite large.

\begin{remark} Example \ref{non-internal} (b) shows that external maximal ideals exist. Let $\aaa=\prod_{\mm \in S} \mm$ where $S$ is an infinite and hyperfinite set of internal maximal ideals.
Then $\aaa$ is internal, but is contained in infinitely many external maximal ideals; in fact the set of external maximal ideals above $\aaa$ is in bijection with the set of free ultrafilters on ${^*\mathcal{P}}_{fin}(S)$. Furthermore, disjoint hyperfinite sets $S$ and $S'$  yield relatively prime ideals $\aaa$ and $\aaa'$ and the sets of maximal ideals above these ideals are disjoint.\hfill$\lozenge$
 \end{remark} 

The next Proposition describes the residue field of a maximal ideal in terms of the associated ultrafilter.

\begin{theorem} Let $\mm$ be a maximal ideal of $B$ and denote by $\kappa = B/\mm$ the residue  field. 
Let $\mathcal{F}$ be the ultrafilter on $\mathcal{L}$ which corresponds to $\mm$. 
Then $\kappa$ is isomorphic to the ultraproduct of the internal residue  fields $B/\pp$ where $\pp \in \mathcal{M}$ :
 \[ \kappa \cong \left( {^*\prod_{\pp \in \mathcal{M}}} B/\pp  \right) / \ \mathcal{F} \] 
If $\mm$ is internal, then $\mathcal{F}$ is principal and the ultraproduct collapses to the internal residue field $B/\mm$.
\label{residue}
 \end{theorem}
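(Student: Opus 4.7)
The plan is to exhibit a surjective ring homomorphism $\varphi : B \to \bigl({^*\prod_{\pp \in \mathcal{M}}} B/\pp\bigr)/\mathcal{F}$ with kernel $\mm$ and apply the first isomorphism theorem. Define $\varphi(b)$ to be the class of the internal tuple $(b + \pp)_{\pp \in \mathcal{M}}$. Here I interpret the ultraproduct via the bounded ultrafilter $\tilde{\mathcal{F}}$ on ${^*\mathcal{P}}(\mathcal{M})$ that corresponds to $\mathcal{F}$ under Proposition \ref{filters}: two internal tuples are identified iff the internal set of coordinates on which they agree lies in $\tilde{\mathcal{F}}$, equivalently iff that set contains some hyperfinite $S \in \mathcal{F}$.

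For the kernel I would invoke the explicit bijection from the Remark preceding the theorem, which identifies $\mm$ with the set of $b \in B$ such that $V(b) = \{\pp \in \mathcal{M} : b \in \pp\} \in \mathcal{F}$ (together with $b = 0$). The point to check is that for nonzero $b$ the internal set $V(b)$ is hyperfinite; this follows by transfer of the standard fact that a nonzero element of a Dedekind ring lies in only finitely many maximal ideals, or directly from Proposition \ref{ideals}(e). Hence $V(b) \in \mathcal{L}$, and Proposition \ref{filters} then collapses the condition $V(b) \in \tilde{\mathcal{F}}$ to $V(b) \in \mathcal{F}$. Since $\varphi(b) = 0$ iff $V(b) \in \tilde{\mathcal{F}}$, this gives $\ker \varphi = \mm$ and an injection $B/\mm \hookrightarrow \bigl({^*\prod_{\pp}} B/\pp\bigr)/\mathcal{F}$.

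For surjectivity, given an internal tuple $(x_\pp)$, first lift it to an internal function $\tilde{x} : \mathcal{M} \to B$ with $\tilde{x}_\pp + \pp = x_\pp$, by transferring the surjectivity of each quotient map $B \to B/\pp$ (this uses internal choice, i.e.\ the transfer of the statement that for every maximal ideal one may pick a coset representative). Next pick any $S \in \mathcal{F}$ and apply the \emph{internal} Chinese Remainder Theorem, obtained by transferring the standard CRT for a finite collection of pairwise coprime maximal ideals, to produce $b \in B$ with $b \equiv \tilde{x}_\pp \pmod{\pp}$ for every $\pp \in S$. The internal set $\{\pp \in \mathcal{M} : b \equiv \tilde{x}_\pp \pmod{\pp}\}$ then contains $S$ and so lies in $\tilde{\mathcal{F}}$, whence $\varphi(b) = [(x_\pp)]$. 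It is striking that a single $S \in \mathcal{F}$ suffices—this is where the upward closure of $\tilde{\mathcal{F}}$ does the essential work.

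The final clause is then immediate: by Corollary \ref{class-max-ideals}(b), internal $\mm$ corresponds to the principal ultrafilter on $\mathcal{L}$ with least element $\{\mm\}$, so equivalence modulo $\tilde{\mathcal{F}}$ reduces to equality at $\pp = \mm$ and the ultraproduct collapses to $B/\mm$. The only genuine subtlety I expect is bookkeeping between $\mathcal{F}$ on $\mathcal{L}$ and its bounded extension $\tilde{\mathcal{F}}$ on ${^*\mathcal{P}}(\mathcal{M})$; once Proposition \ref{filters} and the hyperfiniteness of $V(b)$ are in hand, both halves of the isomorphism reduce to internal transfer arguments.
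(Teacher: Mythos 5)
Your proof is correct and follows essentially the same route as the paper: the diagonal map $B \to \bigl({^*\prod_{\pp \in \mathcal{M}}} B/\pp\bigr)/\mathcal{F}$, kernel identified with $\mm$ via the condition $V(a)\in\mathcal{F}$, and surjectivity by the transferred Chinese Remainder Theorem over a hyperfinite member of $\mathcal{F}$. The only cosmetic difference is that the paper factors the map through ${^*\prod_{\pp \in V(a)}} B/\pp$ for a fixed nonzero $a\in\mm$ rather than lifting the tuple by internal choice and applying CRT over an arbitrary $S\in\mathcal{F}$, and your explicit bookkeeping with the bounded filter $\tilde{\mathcal{F}}$ is a harmless elaboration of what the paper states directly.
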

 
 \begin{proof} We consider the following map which is induced by diagonal embedding and projection:
 \[ \pi:\ B \longrightarrow \left( {^*\prod_{\pp \in \mathcal{M}}} B/\pp  \right) / \ \mathcal{F} \]
Two sequences are identified if they coincide on an index set $F \in \mathcal{F}$. We claim that $\ker(\pi)=\mm$. For the inclusion "$\subset$", let $a \in \ker(\pi)$ and $a \neq 0$. Then there exists a set $F \in \mathcal{F}$ such that 
 $a \in \pp$ for all $\pp \in F$. Hence $F \subset V(a)$, which implies $V(a) \in \mathcal{F}$ and therefore $a \in \mm$. For the inverse inclusion "$\supset$",
 let $a \in \mm$ and $a \neq 0$, so that $V(a) \in \mathcal{F}$. Hence $a \in \pp$ for all $\pp \in V(a)$ which yields $a \in \ker(\pi)$.\\
 The proof is completed by showing that $\pi$ is surjective. 
 Let $a \in \mm$ be any nonzero element. Then $V(a) \in \mathcal{F}$ and $\pi$ can be factorized as follows:
 \[ \pi:\ B \longrightarrow \
 {^*\prod_{\pp \in V(a)}}  B/\pp  
 \longrightarrow
\left( {^*\prod_{\pp \in \mathcal{M}}} B/\pp  \right) / \ \mathcal{F} \]
Since $V(a)$ is hyperfinite, the surjectivity  of the first map follows from the transfer of the Chinese Remainder Theorem. By definition of the reduction modulo $\mathcal{F}$, the second map must also be surjective.\end{proof}
 
\begin{remark} Consider  an internal number field $L$ and $B= \noo_L$. Then the internal residue fields of $B$ are hyperfinite and
  Theorem \ref{residue} shows that every residue field is isomorphic to an ultraproduct of finite fields for a suitable chosen ultrafilter.\hfill$\lozenge$
\end{remark}  
 
 \subsection{Ideal Class Groups and Units}
 
For an integral domain $R$ we denote by $J(R)$ the group of invertible fractional ideals, by $P(R)$ the subgroup of principal fractional ideals. The quotient  $Cl(R):=J(R)/P(R)$ is the ideal class group, i.e.\ the
group of invertible fractional ideals modulo principal ideals. 

\begin{prop} Let $L$ be an internal number field. Then
$Cl(\noo_L)$ is a hyperfinite group and $(\noo_L)^{\times}$ has a hyperfinite number of generators.
\label{hyperfiniteness}
\end{prop}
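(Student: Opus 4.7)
The plan is to obtain both assertions as direct consequences of the Transfer Principle applied to their classical counterparts: the finiteness of the ideal class group (via Minkowski's bound) and Dirichlet's Unit Theorem.

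For the class group, the standard theorem states that for every number field $K$, the group $Cl(\oo_K)$ is finite; more precisely, every ideal class contains an integral ideal of norm at most the Minkowski constant $M_K$, and only finitely many integral ideals have norm below any fixed bound. Both assertions can be formulated as bounded first-order statements in the superstructure language, quantifying over the standard set of number fields (parametrised, say, by minimal polynomials over $\qq$). Transferring, for every internal number field $L$ there exists an internal constant $M_L \in \nsr_{\geq 0}$ such that each class in $Cl(\noo_L)$ is represented by an internal integral ideal of internal norm $\leq M_L$; a second application of transfer shows that the collection of such internal ideals is hyperfinite, so $Cl(\noo_L)$ is hyperfinite.

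For the units, I would transfer Dirichlet's Unit Theorem: for every number field $K$ of signature $(r_1(K), r_2(K))$, the group $\oo_K^\times$ is generated by $r_1(K) + r_2(K)$ elements (a generator of $\mu_K$ together with $r_1(K) + r_2(K) - 1$ fundamental units). By Transfer, for any internal number field $L$ with internal signature $({^*r_1}(L), {^*r_2}(L)) \in \nsn \times \nsn$, the group $(\noo_L)^\times$ admits an internal generating set of hyperfinite cardinality ${^*r_1}(L) + {^*r_2}(L)$.

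The main obstacle is not mathematical but formal: one must check that Minkowski's finiteness theorem and Dirichlet's Unit Theorem do admit formulations as bounded statements in the appropriate superstructure language. This reduces to observing that the constructions $K \mapsto \oo_K$, $K \mapsto Cl(\oo_K)$, $K \mapsto \oo_K^\times$, $K \mapsto M_K$, and $K \mapsto (r_1(K), r_2(K))$ are all definable from standard parameters (a minimal polynomial together with embeddings into $\cc$), so they commute with the $*$-map. Once this point is confirmed, both conclusions follow immediately from Transfer, with no further argument needed.
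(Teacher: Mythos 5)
Your proposal is correct and takes essentially the same route as the paper, whose proof consists precisely of invoking the classical finiteness of the class group and Dirichlet's unit theorem together with the Transfer Principle. Your additional remarks on formalising these theorems as bounded statements (Minkowski bound, signature, definability of $K \mapsto \oo_K$, etc.) merely spell out the transfer step that the paper leaves implicit.
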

\begin{proof} This follows from the corresponding classical results (finiteness of the class group and Dirichlet's unit theorem) and application of the transfer principle.
\end{proof}

For a standard number field $L={^*K}$ we even obtain finiteness:

\begin{prop}
For a number field $K$ the canonical maps
$$Cl(\oo_K)\rightarrow Cl(\noo_K)$$
and 
$$\oo_K^{\times}\otimes_{\mathbb{Z}}\; ^*\mathbb{Z} \rightarrow
(\noo_K)^{\times} $$
are bijective.
\label{finiteness}
\end{prop}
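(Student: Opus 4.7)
The plan is to combine the two classical finiteness results---the finiteness of the ideal class group of $\oo_K$ and Dirichlet's unit theorem---with the transfer principle and the internality characterization of finitely generated ideals given by Proposition \ref{ideals}(a).

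For the first map, I would first argue that $J(\noo_K) = {}^*J(\oo_K)$ and $P(\noo_K) = {}^*P(\oo_K)$. The inclusion ${}^*J(\oo_K) \subset J(\noo_K)$ is immediate from Proposition \ref{ideals}(b); for the reverse inclusion, recall that any invertible ideal in any commutative ring is finitely generated, and by Proposition \ref{ideals}(a) finitely generated ideals of $\noo_K$ are internal, hence lie in ${}^*J(\oo_K)$. Principal ideals are tautologically internal, so the same identification holds for $P$. Since transfer commutes with quotients, this gives $Cl(\noo_K) \cong {}^*Cl(\oo_K)$, and finiteness of $Cl(\oo_K)$ forces ${}^*Cl(\oo_K) = Cl(\oo_K)$. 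Under these identifications the canonical map $[\aaa] \mapsto [{}^*\aaa]$ becomes $x \mapsto {}^*x$, which is the identity on a finite set, hence bijective.

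For the second map, I would apply Dirichlet's unit theorem to split
\[ \oo_K^\times \cong \mu_K \oplus \zz^{r_1+r_2-1}, \]
where $\mu_K$ is the finite cyclic group of roots of unity in $K$. Transferring gives
\[ (\noo_K)^\times = {}^*(\oo_K^\times) \cong {}^*\mu_K \oplus \nsz^{r_1+r_2-1} = \mu_K \oplus \nsz^{r_1+r_2-1}, \]
using ${}^*\mu_K = \mu_K$ for a finite set. On the other hand, tensoring the Dirichlet decomposition with $\nsz$ over $\zz$ yields
\[ \oo_K^\times \otimes_\zz \nsz \cong (\mu_K \otimes_\zz \nsz) \oplus \nsz^{r_1+r_2-1}, \]
and writing $\mu_K \cong \zz/n\zz$ with standard $n$ we get $\mu_K \otimes_\zz \nsz \cong \nsz/n\nsz = \zz/n\zz = \mu_K$. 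Both descriptions therefore agree.

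The only point requiring care is checking that the canonical map---induced by the $*$-embedding $\oo_K^\times \hookrightarrow (\noo_K)^\times$ together with internal exponentiation by elements of $\nsz$---is compatible with the two Dirichlet splittings. This is routine: on the free part $\zz^{r_1+r_2-1}$ the map is extended by $\nsz$-linearity by construction, and on the torsion part the inclusion $\mu_K \hookrightarrow {}^*\mu_K = \mu_K$ is the identity. No genuine obstacle arises, since all the work has already been packaged into the classical finiteness statements and into Proposition \ref{ideals}(a).
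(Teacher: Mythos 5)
Your proof is correct, and the first half is essentially identical to the paper's argument: the paper likewise shows ${}^*P(\oo_K)\to P(\noo_K)$ and ${}^*J(\oo_K)\to J(\noo_K)$ are bijections (invertible $\Rightarrow$ finitely generated $\Rightarrow$ internal, and conversely), invokes exactness of the $*$-functor to pass to the quotient, and then uses finiteness of $Cl(\oo_K)$ to collapse ${}^*Cl(\oo_K)$ to $Cl(\oo_K)$. For the unit group, the paper argues more abstractly: it simply notes that $\oo_K^\times$ is a finitely generated $\zz$-module, so ${}^*(\oo_K^\times)=(\noo_K)^\times$ is a finitely generated $\nsz$-module on the same generators; this is really the general fact that $*$ applied to a finitely presented $\zz$-module $M$ gives $M\otimes_\zz\nsz$ (by exactness of $*$ applied to a finite presentation). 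Your version makes this concrete by splitting $\oo_K^\times\cong\mu_K\oplus\zz^{r_1+r_2-1}$, transferring each summand, and checking $\mu_K\otimes_\zz\nsz\cong\zz/n\zz\otimes_\zz\nsz\cong\nsz/n\nsz\cong\mu_K$; this trades the abstract finitely-presented-module lemma for a hands-on verification on each summand. Both are valid, and yours has the minor virtue of making the injectivity of the second map visible directly rather than leaving it implicit in the presentation-and-exactness argument.
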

\begin{proof} By the
finiteness of $Cl(\oo_K)$ the map 
$$Cl(\oo_K)\rightarrow {^*(Cl(\oo_K))}$$
is bijective. Now we consider the diagram
\[
\xymatrix{
0\ar[r] &  ^*P(\oo_K) \ar[r] \ar[d] & ^*J(\oo_K)  \ar[r] \ar[d]
&^*Cl(\oo_K) \ar[r] \ar[d] & 0\\
0\ar[r] & P(\noo_K) \ar[r] & J(\noo_K) \ar[r] \ar[r] & Cl(\noo_K) \ar[r]&0
}
\]
The first line is exact because $*$ is an exact functor (cp.\ \cite{serpe}). The second line is exact by definition. Further invertible fractional ideals are finitely
generated and finitely generated ideals are automatically internal. Therefore the maps $^*P(\oo_K)\rightarrow P(\noo_K)$ and $^*J(\oo_K)\rightarrow J(\noo_K)$ are bijective. It follows that  $Cl(\noo_K)={^*Cl(\noo_K)}$ is also bijective and so the first claim.

By Dirichlet's Unity theorem we know that $\oo_K^{\times}$ is a finitely
generated $\mathbb{Z}$-module. Therefore $^*(\oo_K^{\times})=(\noo_K)^{\times}$ is
a finitely generated $^*\mathbb{Z}$-module with the same
generators. This gives the second claim.\end{proof}

\subsection{Valuation Rings and Value Groups}

We study the valuation rings and value groups of internal Dedekind rings.
  Let $B$ be an internal Dedekind ring with quotient field $L$ and $\mm$ a maximal ideal of $B$. Since $B$ is Pr\"ufer, the localization $B_{\mm}$ is a valuation ring (see Corollary \ref{pruefer}). If $\mm$ is internal, then $B_{\mm}$ is a $*$-discrete (aka ultra-discrete) valuation ring with field $L$ and value group $\nsz$.
\label{loc}

We describe the {\em value group} of $B_{\mm}$ for all maximal ideals $\mm$, including the {\em external} ideals. First, we need a statement over standard Dedekind rings:

\begin{prop} Let $A$ be a Dedekind ring and $W$ a finite set of maximal ideals of $A$. Define the multiplicative set $S= \bigcap_{ \pp \in W} (A \setminus \pp)$. Then
$$ S^{-1} A = \bigcap_{\pp \in W} A_{\pp} \ . $$
\end{prop}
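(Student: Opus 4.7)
My plan is to prove the two inclusions separately, with the nontrivial content concentrated in the "$\supset$" direction via prime avoidance.

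First, I note that $S = A \setminus \bigcup_{\pp \in W} \pp$, so by construction $S \subset A \setminus \pp$ for each $\pp \in W$. The universal property of localisation at $A \setminus \pp$ (or a direct check) gives a canonical ring map $S^{-1}A \to A_{\pp}$ for each $\pp \in W$, and these maps agree on $A$, so $S^{-1}A \subset \bigcap_{\pp \in W} A_{\pp}$ inside $L := \mathrm{Frac}(A)$.

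For the reverse inclusion, let $x \in \bigcap_{\pp \in W} A_{\pp}$. The plan is to find a single denominator $s \in S$ with $sx \in A$, so that $x \in S^{-1}A$. Introduce the denominator ideal
\[
  I \;=\; \{a \in A \mid a x \in A\} \;\subset\; A.
\]
This is an ideal of $A$. For every $\pp \in W$, the hypothesis $x \in A_\pp$ means that there is some $s_\pp \in A \setminus \pp$ with $s_\pp x \in A$, hence $s_\pp \in I \setminus \pp$; therefore $I \not\subset \pp$ for all $\pp \in W$.

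The key step is now prime avoidance: since $W$ is finite and $I$ is not contained in any of the primes $\pp \in W$, $I$ is not contained in $\bigcup_{\pp \in W} \pp$. Choose $s \in I \setminus \bigcup_{\pp \in W} \pp = I \cap S$. Then $sx \in A$ and $s \in S$, so $x = (sx)/s \in S^{-1}A$, completing the proof.

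The only substantive point is the prime-avoidance step, and it is entirely standard; the Dedekind hypothesis is only used to ensure the setup (it is not actually needed for the proof itself, which works over any commutative ring with a finite collection of prime ideals $W$).
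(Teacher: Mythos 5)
Your proof is correct, and it takes a genuinely different route from the paper's. The paper first identifies the nonzero primes of $S^{-1}A$ as $\pp S^{-1}A$ for $\pp\in W$, observes that the localizations at these are the DVRs $A_\pp$, concludes that $S^{-1}A$ is a Dedekind (in particular Krull) domain, and then invokes the general fact that a Krull domain is the intersection of its localizations at height-one primes (Matsumura 11.5). You instead take $x\in\bigcap_{\pp\in W}A_\pp$, form the denominator ideal $I=\{a\in A\mid ax\in A\}$, note that $x\in A_\pp$ forces $I\not\subset\pp$, and use finite prime avoidance to extract a single $s\in I\cap S$. Your argument is more elementary and, as you observe, more general: it only needs $A$ to be an integral domain (so that the fraction field exists) and $W$ to be finite, whereas the paper's argument leans on the full Dedekind hypothesis to make $S^{-1}A$ a Krull domain. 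The paper's route is shorter given the cited structure theorem, but yours exposes the elementary mechanism (denominator ideal plus prime avoidance) underneath it.
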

\begin{proof} Note that elements in $\frac{a}{b} \in S^{-1} A$  have a representation $x= \frac{a}{b}$ with $b \notin \pp$ for {\em all} $\pp \in W$. Thus the inclusion "$\subset$" is trivial.

Obviously, the nonzero ideals of $S^{-1} A$ are $\pp S^{-1} A$ where $\pp \in W$. These ideals are also maximal. For $\pp \in W$, the localization $(S^{-1} A)_{\pp}$ is isomorphic to the discrete valuation ring $A_{\pp}$. Hence $S^{-1}A$ is a Dedekind domain. Then $S^{-1} A$ is the intersection over all localizations at height 1 (i.e.\ at maximal ideals) (see \cite{matsumura} 11.5).
\end{proof}

The Transfer Principle implies the following Corollary:
\begin{cor} Let $B$ be an internal Dedekind ring with quotient field $L$ and let $F$ be a hyperfinite set of internal maximal ideals of $B$. Suppose that $x \in L$ satisfies $v_{\pp}(x) = 0$ for all $\pp \in F$. Then there exist elements $a,b \in B$ with $v_{\pp}(a)=v_{\pp}(b)=0$ for all $\pp \in F$ and $x = \frac{a}{b}$. 
\label{shorten}
\end{cor}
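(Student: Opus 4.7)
The plan is to derive this directly from the preceding Proposition via the Transfer Principle, with the valuation condition $v_{\pp}(x)=0$ translated into the equivalent statement that $x$ is a unit in $B_{\pp}$.

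First I would formulate the preceding Proposition as an internal statement. For a (standard) Dedekind ring $A$ and a finite set $W$ of maximal ideals, the proposition asserts the equality $S^{-1}A=\bigcap_{\pp\in W}A_{\pp}$, where $S=\bigcap_{\pp\in W}(A\setminus\pp)$. Equivalently, one may phrase it as: for every $y$ in the quotient field, if $y\in A_{\pp}$ for all $\pp\in W$, then there exist $a\in A$ and $b\in A$ with $b\notin\pp$ for all $\pp\in W$ and $y=a/b$. This is a first-order property of the Dedekind ring $A$ together with the finite set $W$, so it transfers verbatim: for every internal Dedekind ring $B$ and every hyperfinite set $F$ of internal maximal ideals, the analogous statement holds with $a,b\in B$ and $b\notin\pp$ for all $\pp\in F$.

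Next I would use the hypothesis $v_{\pp}(x)=0$. Because the $v_{\pp}$ for $\pp\in F$ are the internal valuations attached to internal maximal ideals (Proposition \ref{ideals}(d)), $v_{\pp}(x)=0$ is equivalent to $x\in B_{\pp}^{\times}$, and in particular $x\in B_{\pp}$, for every $\pp\in F$. Hence $x$ lies in the intersection $\bigcap_{\pp\in F}B_{\pp}$. Applying the transferred Proposition yields elements $a,b\in B$ with $b\notin\pp$ for all $\pp\in F$ and $x=a/b$. The condition $b\notin\pp$ for all $\pp\in F$ is exactly $v_{\pp}(b)=0$ for all $\pp\in F$.

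Finally, the valuation of $a$ is forced: from $a=xb$ and additivity of $v_{\pp}$ on $L^{\times}$ one gets $v_{\pp}(a)=v_{\pp}(x)+v_{\pp}(b)=0+0=0$ for every $\pp\in F$, as required. The only point that deserves care is ensuring the Proposition is phrased as an internal formula before transfer (in particular that the finite set $W$ is handled as a parameter ranging, after transfer, over hyperfinite sets of internal maximal ideals); once that is in place, the Corollary is essentially a one-line consequence of transfer plus the identity $v_{\pp}(a)=v_{\pp}(x)+v_{\pp}(b)$.
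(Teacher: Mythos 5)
Your proposal is correct and essentially matches the paper's argument: the paper likewise invokes the Transfer Principle on the preceding proposition to obtain $x=a/b$ with $b\notin\pp$ for all $\pp\in F$, and then uses the valuation identity $v_{\pp}(a)=v_{\pp}(x)+v_{\pp}(b)$ to force $v_{\pp}(a)=0$ (the paper states this last step very tersely, with the roles of $a$ and $b$ apparently swapped in its wording, but the content is the same).
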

\begin{proof} $v_{\pp}(x) = 0$ and $x=\frac{a}{b}$ with $a\notin \pp$ implies $b \notin \pp$.
\end{proof}
 
 \begin{theorem} Let $B$ be an internal Dedekind ring with quotient field $L$. Suppose $\mm$ is a maximal ideal of $B$ and $\mathcal{F}$ is the corresponding ultrafilter. Let $G$ be the value group of the valuation ring $B_{\mm}$. Then
 \[ G \cong \left( {^*\bigoplus}_{\mathcal{M}} {^*\zz} \right) / \ \mathcal{F} \ . \]
Two internal Weil divisors are identified if they coincide on an index set which is contained in $\mathcal{F}$. If $\mm$ is internal, then $\mathcal{F}$ is principal and generated by $\mm$; in this case $G \cong \nsz$.
\label{value-group}
 \end{theorem}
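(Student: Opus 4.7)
The plan is to construct a group homomorphism $\psi \colon L^{\times} \to ({^*\bigoplus}_{\mathcal{M}} {^*\zz})/\mathcal{F}$ whose kernel is exactly $B_{\mm}^{\times}$. Since by definition of the valuation ring $B_{\mm}$ we have $G = L^{\times}/B_{\mm}^{\times}$, this will yield the claimed isomorphism.

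For $x \in L^{\times}$ set $\phi(x) := (v_{\pp}(x))_{\pp \in \mathcal{M}}$, where $v_{\pp}$ is the internal valuation corresponding to the internal maximal ideal $\pp$ (Proposition \ref{ideals}(d)). By the hyperfinite factorisation of internal fractional ideals (Proposition \ref{ideals}(e)), $\phi(x) \in {^*\bigoplus}_{\mathcal{M}} {^*\zz}$, and $\phi$ is a group homomorphism. Define $\psi$ as the composition of $\phi$ with the quotient map. Surjectivity of $\psi$ follows from the transferred approximation theorem: given any $D \in {^*\bigoplus}_{\mathcal{M}} {^*\zz}$ with hyperfinite support $S$ and any $F_0 \in \mathcal{F}$, one finds $x \in L^{\times}$ realising the prescribed valuations on $S \cup F_0 \in \mathcal{F}$, so $\phi(x)$ agrees with $D$ on a set in $\mathcal{F}$ and $\psi(x) = [D]$.

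The heart of the proof is the identification $\ker \psi = B_{\mm}^{\times}$. If $x \in B_{\mm}^{\times}$, write $x = a/s$ with $a, s \in B \setminus \mm$. By the Remark following Corollary \ref{class-max-ideals}, the correspondence between $\mm$ and the ultrafilter reads $c \in \mm \Leftrightarrow V(c) \in \mathcal{F}$ for $c \in B \setminus \{0\}$, where $V(c) := \{\pp \in \mathcal{M} \mid c \in \pp\} \in \mathcal{L}$, so $V(a), V(s) \notin \mathcal{F}$. Since $\mathcal{F}$ is a prime filter on the distributive lattice $\mathcal{L}$ (Theorem \ref{prop-ideals}), also $V(a) \cup V(s) \notin \mathcal{F}$. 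Pick any $K \in \mathcal{F}$ and set $F := K \setminus (V(a) \cup V(s)) \in \mathcal{L}$; since $F \cup (K \cap (V(a) \cup V(s))) = K \in \mathcal{F}$, primality forces $F \in \mathcal{F}$. On $F$ we have $v_{\pp}(a) = v_{\pp}(s) = 0$, so $\phi(x)|_F = 0$ and $\psi(x) = 0$. Conversely, if $\psi(x) = 0$ there exists $F \in \mathcal{F}$ with $v_{\pp}(x) = 0$ for all $\pp \in F$; Corollary \ref{shorten} then yields $x = a/s$ with $v_{\pp}(a) = v_{\pp}(s) = 0$ on $F$, so $V(a) \cap F = V(s) \cap F = \varnothing$. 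Since $\varnothing \notin \mathcal{F}$ and $\mathcal{F}$ is closed under intersection, $V(a), V(s) \notin \mathcal{F}$, whence $a, s \notin \mm$ and $x \in B_{\mm}^{\times}$.

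It remains to transfer the ordering and handle the internal case. The positive cone of $G$ is the image of $B_{\mm} \setminus \{0\}$; a parallel argument using a single denominator $s \notin \mm$ matches this with the cone $\{[f] \mid f|_F \geq 0 \text{ for some } F \in \mathcal{F}\}$ on the right-hand side, so $\psi$ is an ordered group isomorphism. If $\mm$ is internal, Corollary \ref{class-max-ideals}(b) says $\mathcal{F}$ is the principal ultrafilter generated by $\{\mm\}$, so $f \sim g \Leftrightarrow f(\mm) = g(\mm)$ and the quotient collapses to ${^*\zz}$, recovering the known value group of the internal $*$-discrete valuation ring. The main technical obstacle is the kernel computation, which requires weaving together the filter-theoretic description of $\mm$, the primality and relative complementation of $\mathcal{F}$ in $\mathcal{L}$, and Corollary \ref{shorten} in order to pass between abstract statements about $\mathcal{F}$ and concrete representations $x = a/s$ whose numerator and denominator have controlled valuation support.
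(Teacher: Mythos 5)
Your proposal is correct, and at its core it is the same argument as the paper's: both proofs identify the value group with internal Weil divisors modulo $\mathcal{F}$, and the decisive step in both is the identical kernel computation, combining Corollary \ref{shorten} with the description $c\in\mm \Leftrightarrow V(c)\in\mathcal{F}$ of the corresponding ultrafilter and the primality of $\mathcal{F}$ to arrange $v_{\pp}(a)=v_{\pp}(b)=0$ on a set of $\mathcal{F}$. The difference is one of packaging. The paper works at the level of fractional ideals: it uses the isomorphism $\alpha:{^*\bigoplus}_{\mathcal{M}} {^*\zz}\cong J(B)$ of Proposition \ref{ideals} and the projection $J(B)\twoheadrightarrow J(B_{\mm})\cong G$, so surjectivity is automatic and the whole theorem reduces to showing $\ker(\pi_1)=\ker(\pi_2)$. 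You instead realise $G$ as $L^{\times}/B_{\mm}^{\times}$ and use the principal-divisor map $x\mapsto(v_{\pp}(x))_{\pp}$; this costs you a separate surjectivity argument, which you legitimately supply via the transferred approximation theorem (the same tool the paper invokes later, in Proposition \ref{propcompletions}), but it buys a cleaner kernel statement: the divisor of $x$ is literally $(v_{\pp}(x))_{\pp}$, so you never have to relate a given divisor $(x_{\pp})$ to the valuations of a chosen single generator of its image in $J(B_{\mm})$, a comparison on which the paper's reverse inclusion quietly relies. Your extra observations on transporting the ordering (via the positive cones $B_{\mm}\setminus\{0\}$ and $\{[f]\mid f\geq 0 \text{ on some } F\in\mathcal{F}\}$) and on the principal-ultrafilter collapse to $\nsz$ in the internal case are consistent with Proposition \ref{class-prime-ideals}'s context, Corollary \ref{class-max-ideals} and Proposition \ref{ideals}, and fill in points the paper leaves implicit.
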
   
  \begin{proof} $G$ is isomorphic to the ordered group of {\em principal} fractional ideals of $B_{\mm}$.  
Since $B_{\mm}$ is a valuation ring, this is the same as the group $J(B_{\mm})$ of {\em all} internal fractional ideals of $B_{\mm}$.
We denote the group of internal fractional ideals of $B$ by $J(B)$. There is a natural projection $\pi: J(B) \twoheadrightarrow G$ and the kernel consists of all
fractional ideals $\aaa \in J(B)$ with the property $\aaa \cdot B_{\mm} = B_{\mm}$.
Consider the following diagram:
 
\begin{figure*}[h!]
\begin{center}
\begin{tikzpicture}

 \node (k) at (0,0) {${^*\bigoplus}_{\mathcal{M}} {^*\zz}$};
 \node (sk) at (4,0) {$\left( {^*\bigoplus}_{\mathcal{M}} {^*\zz} \right) / \ \mathcal{F} $};
 \node (kx) at (0,-2) {$J(B)$};
 \node (sky) at (4,-2) {$J(B_{\mm}) \cong G$};
 
l  \draw[->>, thick]  (k)  edge node[above] {$\pi_1 $}  (sk);
  \draw[->>, thick]  (kx)  edge node[above] {$\pi$}  (sky);
   \draw[->, thick]  (k)  edge node[right] {$\cong$} node[left] {$\alpha$} (kx);
     \draw[->, thick, dashed]  (sk)  edge node[right] {$\beta$}  (sky);
       \draw[->>, thick]  (k)  edge node[right] {$\ \pi_2$}  (sky);
          
\end{tikzpicture}
\caption {Internal divisors of $B$ (left) and $B_{\mm}$ (right).}
\end{center}
\end{figure*}
The horizontal maps $\pi_1$ and $\pi$ are projections, and the vertical isomorphism $\alpha$ is given by
Proposition \ref{ideals}.  $\alpha$ maps an internal divisor $(x_{\pp}) \in {^*\bigoplus}_{\mathcal{M}} {^*\zz}$ to the internal fractional ideal 
$\aaa = \prod_{\pp \in \mathcal{M}} \pp^{x_{\pp}} \in J(B) $. Internal fractional ideals are finitely generated and the image $\pi(\aaa) \in G$ has a single generator $x \in L$, since $B_{\mm}$ is a valuation ring.

We claim that the dashed vertical map $\beta$ exists and is an isomorphism. Since $\pi_2$ is surjective we only need to show
that $\ker(\pi_1)=\ker(\pi_2)$.

Let $(x_{\pp}) \in \ker(\pi_1)$. Then there exists a hyperfinite set $F \in \mathcal{F}$ such that $x_{\pp}=0$ for all $\pp \in F$.
Let $\aaa = \prod_{\pp} \pp^{x_{\pp}} \in J(B)$ be the associated
fractional ideal of $B$. 
By Corollary \ref{shorten} we have that $\pi(\aaa)=\pi_2((x_{\pp}))$ is generated by $x=\frac{a}{b}$ with $a,b \in B$ and $v_{\pp}(a)=v_{\pp}(b)=0$ for all $\pp \in F$. This implies $V(a) \cap F = V(b) \cap F = \varnothing$.
Obviously, $V(a) \notin \mathcal{F}$, since otherwise $V(a) \cap F = \varnothing \in \mathcal{F}$, a contradiction. 
Similarly, $V(b) \notin \mathcal{F}$.
This shows that $a,b \notin \mm$, so that $x$ is a unit in $B_{\mm}$ which implies $(x_{\pp}) \in \ker(\pi_2)$.

For the inverse inclusion, let $(x_{\pp}) \in \ker(\pi_2)$. Then $\aaa = \prod_{\pp} \pp^{x_{\pp}} \in J(B)$ is the corresponding fractional ideal and
$\pi(\aaa)=\pi_2((x_{\pp}))=x \cdot B_{\mm} \in G$ with $x \in L$.  By assumption, $x$ is a unit in $B_{\mm}$
and hence $x=\frac{a}{b}$ with $a,b \in B \setminus \mm$. Thus $V(a), V(b) \notin \mathcal{F}$. Choose an arbitrary $F \in \mathcal{F}$. Since $\mathcal{F}$ is a prime filter, $F \setminus V(a) \in \mathcal{F}$, $F \setminus V(b) \in \mathcal{F}$
and thus
$\widetilde{F}=(F \setminus V(a)) \cap (F \setminus V(b)) \in \mathcal{F}$. Then $a \notin \pp$ and $b \notin \pp$ for all
$\pp \in \widetilde{F}$ and therefore $v_{\pp}(x)=0$ for all $\pp \in \widetilde{F}$. We conclude that $(x_{\pp}) \in \ker(\pi_1)$.
\end{proof}

The next Proposition shows that $\zz$ is a natural subgroup of $G$.
 
 \begin{prop} Let $G=\left( {^*\bigoplus}_{\mathcal{M}} {^*\zz} \right) / \ \mathcal{F}$ be as in the Theorem above.
 \begin{enumerate}
 \item There is a monomorphism of ordered groups $\zz \hookrightarrow G$; choose any $F \in \mathcal{F}$ and map $k \in \zz$ to $(x_{\pp}) \in {^*\bigoplus}_{\mathcal{M}} {^*\zz} $ where $x_{\pp} = k$ if $k \in F$ and zero otherwise. This map does not depend on the choice of $F \in \mathcal{F}$.
 \item The image of $\zz$ under the monomorphism (a) coincides with the image of ${^*\bigoplus}_{\mathcal{M}\ } {\zz}$ in $G$.
 \item If $\mathcal{F}$ is principal, then the monomorphism (a) is the given by the inclusion $\zz \subset \nsz$.
 \end{enumerate}
 \label{zz}
 \end{prop}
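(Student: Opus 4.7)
The plan is to handle the three parts in order, with the bulk of the argument concentrated in (b).

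For (a), I would first check well-definedness: for two choices $F, F' \in \mathcal{F}$ the divisors $k\chi_F$ and $k\chi_{F'}$ agree on $F \cap F'$, which lies in $\mathcal{F}$ by the filter axioms, so they represent the same class in $G$. Additivity is immediate from $(k_1+k_2)\chi_F = k_1\chi_F + k_2\chi_F$ pointwise, and the order on $G$, being induced by the pointwise order on internal divisors, is preserved since $k_1 \leq k_2$ implies $k_1\chi_F \leq k_2\chi_F$ pointwise on $F$. For injectivity, if $k\chi_F$ is equivalent to the zero divisor then the two agree on some $F'' \in \mathcal{F}$; evaluating at any $\pp \in F \cap F''$ (nonempty since $\mathcal{F}$ is proper) forces $k = 0$.

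The heart of the proof is (b). The inclusion of the image of $\zz$ inside the image of ${^*\bigoplus}_{\mathcal{M}} \zz$ is trivial, since $k\chi_F$ has values in $\zz$ and hyperfinite support. For the reverse direction, fix $(x_\pp) \in {^*\bigoplus}_{\mathcal{M}} \zz$ with hyperfinite support $H$. The image $V = \{x_\pp : \pp \in H\}$ is an internal subset of $\nsz$ which externally lies in $\zz$. I would show $V$ is finite via countable saturation: otherwise, for every $k \in \nn$ the internal set $V \cap \{v \in \nsz : |v| > k\}$ is nonempty, the family indexed by $\nn$ has the finite intersection property, and saturation produces an element of $V$ that is infinite in $\nsz$, contradicting $V \subset \zz$. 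Writing $V = \{k_1, \ldots, k_r\}$, the support decomposes as an internal finite partition $H = S_1 \sqcup \cdots \sqcup S_r$ with $S_i = \{\pp \in H : x_\pp = k_i\}$. By Proposition \ref{filters} together with Theorem \ref{prop-ideals}(f), $\mathcal{F}$ extends to a bounded ultrafilter on ${^*\mathcal{P}}(\mathcal{M})$; thus either $H \in \mathcal{F}$, in which case iterating the prime/ultrafilter property across the partition yields some $S_i \in \mathcal{F}$ on which $(x_\pp)$ coincides with $k_i\chi_{S_i}$, placing $(x_\pp)$ in the image of $k_i \in \zz$, or $\mathcal{M} \setminus H \in \mathcal{F}$ and $(x_\pp)$ vanishes on a set of $\mathcal{F}$, so represents the image of $0$.

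For (c), when $\mathcal{F}$ is principal its generator at the level of maximal filters on $\mathcal{L}$ is the singleton $\{\mm\}$ for some internal maximal ideal $\mm$, and Theorem \ref{value-group} gives $G \cong \nsz$ via the evaluation of a divisor at $\mm$; the divisor $k\chi_{\{\mm\}}$ then maps to $k \in \nsz$, so the monomorphism of (a) is the standard inclusion $\zz \hookrightarrow \nsz$. The main obstacle is the saturation step in (b), which converts the external hypothesis that all values lie in $\zz$ into the internal finiteness of $V$ so that an ultrafilter can select a single representative value; the remaining verifications are routine lattice and filter manipulations.
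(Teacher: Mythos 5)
Your proposal is correct and takes essentially the same route as the paper's proof: for (b) one represents the class by a divisor with values in $\zz$, observes that the set of values is finite, partitions the support according to the values, and uses primality of the ultrafilter to select a single value $k_0$, while (a) and (c) are the same routine verifications (with (c) reduced to Theorem \ref{value-group}). The only cosmetic differences are that you obtain finiteness of the value set by countable saturation where the paper notes that the internal image of a hyperfinite set is hyperfinite and hence finite once it sits inside $\zz$, and that you apply the ultrafilter dichotomy after extending $\mathcal{F}$ to a bounded ultrafilter on ${^*\mathcal{P}}(\mathcal{M})$ via Proposition \ref{filters}, whereas the paper partitions a set $F\in\mathcal{F}$ directly inside $\mathcal{L}$; both variants are sound.
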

 \begin{proof} Two sequences yield the same element in  $G$, if they coincide on any $F \in \mathcal{F}$. The map is a well defined homomorphism, preserves the order and is injective. This shows (a).\\ 
 Let $(x_{\pp}) \in {^*\bigoplus}_{\mathcal{M}} {^*\zz}$ be in the image of ${^*\bigoplus}_{\mathcal{M}\ } {\zz}$. Then there exists a set $F \in \mathcal{F}$ such that
 $x_{\pp} \in \zz$ for all $\pp \in F$. Since $F$ is hyperfinite and the direct sum is internal, the set $H=\{ x_{\pp}\ |\ \pp \in F \}$ must be hyperfinite, too.
 Since $H$ is also a subset of  $\zz$, we conclude that $H$ must be finite. Let $k \in \zz$ and $F_k \subset F$ the subset of $\pp \in F$ where $x_{\pp}=k$ holds. Only a finite number of sets $F_k$ are non-empty and $F = \dot{\bigcup}_{k \in \zz} F_k$. Hence there exists $k_0 \in \zz$ such that $F_{k_0} \in \mathcal{F}$.
This implies that $(x_{\pp})_{\pp \in \mathcal{M}}$ and the image of $k_0$  give the same element in $G$ which proves (b).\\
(c) follows from Proposition \ref{value-group} and (a).   
\label{subgroups}
 \end{proof}

We use valuation theory in order to classify the ideals of the localization at a maximal ideal. Recall that the prime ideals of $B$ are contained in exactly one maximal ideal.

\begin{prop} Let $\mm$ be a maximal ideal of the internal Dedekind ring $B$ and $B_{\mm}$ the valuation ring having value group $G$. There is a bijection between the
ideals $\aaa$ of $B_{\mm}$ and convex sets $U \subset G$ of non-negative elements.
The prime ideals $\pp \subset \mm$ correspond to convex subgroups $U \subset G$.
The convex subgroups $U \subset G$ and the prime ideals $\pp \subset \mm$ are totally ordered. 
\label{class-prime-ideals}
\end{prop}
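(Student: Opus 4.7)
The plan is to use the classical valuation-ring correspondence between ideals and upper sets of the non-negative cone of the value group. By Corollary~\ref{pruefer}, $B_\mm$ is a valuation ring; let $v$ denote the corresponding valuation on $L$ with value group $G$.

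First, to each nonzero ideal $\aaa \subset B_\mm$ I would associate the set $U_\aaa = v(\aaa \setminus \{0\}) \subset G_{\geq 0}$, and to each upward-closed subset $U \subset G_{\geq 0}$ the set $\aaa_U = \{0\} \cup \{x \in B_\mm : v(x) \in U\}$. The set $U_\aaa$ is upward closed (and hence convex in the totally ordered set $G_{\geq 0}$) because $\aaa$ absorbs multiplication by $B_\mm$ and $v$ restricted to $B_\mm \setminus \{0\}$ is surjective onto $G_{\geq 0}$, which is a standard property of valuation rings. That $\aaa_U$ is an ideal follows from the ultrametric inequality $v(x+y) \geq \min\{v(x), v(y)\}$ together with the upward-closure of $U$, and a direct check shows the two constructions are mutually inverse.

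For the prime ideals, I would use the characterisation that $\pp \subset B_\mm$ is prime iff $B_\mm \setminus \pp$ is multiplicatively closed, which translates via $v$ to the condition that $G_{\geq 0} \setminus U_\pp$ is closed under addition. Setting $H_\pp = (G_{\geq 0} \setminus U_\pp) \cup \bigl(-(G_{\geq 0} \setminus U_\pp)\bigr)$, I would verify that $H_\pp$ is a convex subgroup of $G$ via a short case analysis on signs: closure under negation is built in, and closure under addition uses the additive closure of the non-negative part combined with the upward-closure property defining $U_\pp$. Conversely, every convex subgroup $H \subset G$ yields a prime ideal $\pp_H = \{0\} \cup \{x \in B_\mm : v(x) \in G_{\geq 0} \setminus H\}$, and these two constructions are inverse to each other.

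Finally, the total ordering is a standard fact about ordered abelian groups: if $H_1, H_2$ are convex subgroups of $G$ and neither is contained in the other, pick $h_1 \in H_1 \setminus H_2$ and $h_2 \in H_2 \setminus H_1$; after replacing each $h_i$ by $\max(h_i, -h_i)$ and assuming $0 \leq h_1 \leq h_2$, convexity of $H_2$ forces $h_1 \in H_2$, a contradiction. The induced total order on prime ideals then follows from the established bijection. The main obstacle is the third step, namely verifying that $H_\pp$ is a subgroup (not merely an additively closed set) and convex; since $v$ and $G$ may be external and $G$ need not be discrete, the argument must stay at the level of ordered group structure and avoid any reliance on internal or discrete features of $G$.
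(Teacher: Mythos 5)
Your proof is correct and amounts to a from-scratch rederivation of the classical valuation-ring correspondence, which the paper instead establishes by citing Schilling \cite{schilling1950} (Corollary 1 and Lemma 11). The reasoning (surjectivity of $v$ on $B_{\mm}\setminus\{0\}$ onto $G_{\geq 0}$, ultrametric inequality for $\aaa_U$, multiplicative closedness of the complement for primes, passage to the convex subgroup $H_{\pp}$, and the standard $\max(h,-h)$ argument for total ordering) is sound and stays, as you note, at the level of ordered abelian groups, so it applies even when $G$ and $v$ are external.

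The one substantive difference from the paper is the parametrization. You associate to $\aaa$ the \emph{upward-closed} value set $U_{\aaa}=v(\aaa\setminus\{0\})\subset G_{\geq 0}$ and recover $\aaa_U=\{0\}\cup\{x: v(x)\in U\}$, whereas the remark following the Proposition works with the complementary set: a ``convex set $U$ of non-negative elements'' (in effect a \emph{downward-closed} subset of $G_{\geq 0}$), with $\aaa=\{a: v(a)>x \ \forall x\in U\}\cup\{0\}$. The two parametrizations differ by complementation within $G_{\geq 0}$ and yield the same bijection. Your version is arguably the cleaner precisification of the statement: read literally, ``convex set of non-negative elements'' is too broad (e.g.\ $\{2,3\}$ and $\{0,1,2,3\}$ in $\nsz_{\geq 0}$ are both convex but give the same ideal under the paper's map), while ``upward-closed subset of $G_{\geq 0}$'' is exactly the class for which your two constructions are mutually inverse. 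It would be worth a sentence in your write-up flagging that you have replaced ``convex'' by ``upward-closed'' and explaining why the two formulations match up under complementation.
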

Recall that a subset $U \subset G$ is called {\em convex}, if $x,z \in U$ and $x \leq y \leq z$ implies $y \in U$.
The associated ideal $\aaa$ consists of all $a \in B_{\mm}$ with $x < v(a)$ for all $x \in U$ or $a=0$.
A subgroup $U \subset G$ is {\em convex} or {\em isolated}, if $x \in U$ and $0 \leq y \leq x$ implies $y \in U$.
The elements $a \in \pp$ of the corresponding prime ideal satisfy $x < v(a)$ for all $x \in U$ or $a=0$.
\begin{proof}
See \cite{schilling1950} Corollary 1 and Lemma 11.\end{proof}

\begin{remark} 
Let $\mm$ and $G$ be as above. A convex subgroup
$U$  corresponds to the prime ideal $\pp$ with $a \in \pp$ iff $x<v(a)$ for all $x \in U$ or $a=0$.  If $U=\{0\}$ then $\pp = \mm$. Further, $U=\nsz$ yields $\pp=(0)$.  

The smallest nonzero convex subgroup of $G$ is $\zz$ (see Proposition \ref{zz}). The corresponding prime ideal $\pp$ consists of all $a \in \mm$ with the property that $v(a) \notin \zz$. We write $\pp= \mm^{\infty}$ and this is the largest prime ideal which is strictly contained in $\mm$.
If $\mm$ is internal then $G = \nsz$ and $\pp=\mm^{\infty}=\bigcup_{N} \mm^N$, where $N \in \nsn \setminus \nn$.  
It was shown in \cite{klingen1975} 2.13 that $\mm$ contains a decreasing chain of prime ideals of length $\# (\nsn)$.
The chains of prime ideals of $B=\nsz$ for different types of maximal ideals (standard, internal, external) are shown in Figure \ref{chain}.
\hfill$\lozenge$ 
\label{ex-prime-ideals}
\end{remark}

\begin{figure*}[h!]
\begin{center}
\begin{tikzpicture}

\node (i) at (1,4.5) {standard};
\node (i) at (4,4.5) {internal};
\node (i) at (7,4.5) {external};
\node(idn) at (10,4) {$U=\{0\}$};
\node(idz) at (10,2.5) {$U=\zz \subset G$};
\node(idu) at (10,1.25) {$\zz \subsetneq U \subsetneq G$};
\node(idg) at (10,0) {$U=G $};
 \node (p) at (0.85,4) {$(p)$};
 \node (P) at (4,4) {$(P)$};
 \node (m) at (7.2,4) {$\mm$};
 \node (pu) at (2,2.5) {$(p)^{\infty}$};
 \node (Pu) at (4,2.5) {$(P)^{\infty}$};
 \node (mu) at (6,2.5) {$\mm^{\infty}$};
 \node (z) at (4,0) {$(0)$};
 
  \draw[thick]  (p)  edge node[right] {}  (pu);
  \draw[thick]  (P)  edge node[right] {}  (Pu);
    \draw[thick]  (m)  edge node[right] {}  (mu);
  \draw[dotted, thick]  (pu)  edge node[right] {}  (z);
  \draw[dotted, thick]  (Pu)  edge node[right] {}  (z);
    \draw[dotted, thick]  (mu)  edge node[right] {}  (z);

\end{tikzpicture}
\caption {Chains of prime ideals in $\nsz$. Convex subgroups $U \subset G$ correspond to prime ideals.}
\label{chain}
\end{center}
\end{figure*}

\begin{prop} Let $\mm$ be a maximal ideal of an internal Dedekind ring $B$, $B_{\mm}$ the valuation ring, $L$ the quotient field and $v: L \rightarrow G \cup \{\infty\}$ the corresponding valuation. Suppose that  $U$ is a convex subgroup of $G$ and $\pp$ is the prime ideal corresponding to $U$. This induces a valuation $v_{/U} :\ L \rightarrow (G/U) \cup \{\infty\}$ and the valuation ring  with respect to $v_{/U}$ is 
the localization  $B_{\pp}$. Furthermore, $v$ induces a valuation $v_{|U}$ on the residue field $k_{\pp} := B_{\pp} / \pp$ with value group $U$.
\label{isol-val}
\end{prop}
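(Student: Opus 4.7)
The plan is to carry out three steps: construct $v_{/U}$ and identify its valuation ring with $B_{\pp}$, then construct $v_{|U}$ on the residue field, and finally verify that its value group is exactly $U$.

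First, since $U$ is convex in the ordered group $G$, the quotient $G/U$ inherits a total order by setting $g_1 + U \leq g_2 + U$ iff $g_1 \leq g_2 + u$ for some $u \in U$, and the projection $G \twoheadrightarrow G/U$ is order-preserving. Define $v_{/U}: L \rightarrow (G/U) \cup \{\infty\}$ by $v_{/U}(x) = v(x) + U$ for $x \neq 0$ and $v_{/U}(0) = \infty$. Multiplicativity and the ultrametric inequality for $v_{/U}$ follow directly from those of $v$ together with the fact that the projection preserves $\geq$. Hence $v_{/U}$ is a valuation.

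Next I would identify its valuation ring $R'$ with $B_{\pp}$. By Proposition \ref{class-prime-ideals}, $a \in \pp$ iff $v(a) > u$ for every $u \in U$ (or $a=0$), so that for $b \in B \setminus \{0\}$ one has $b \notin \pp$ iff $v(b) \leq u$ for some $u \in U$; since $v(b) \geq 0$ and $U$ is convex with $0 \in U$, this is equivalent to $v(b) \in U$. For the inclusion $B_{\pp} \subset R'$, take $a/s \in B_{\pp}$ with $a \in B$ and $s \in B \setminus \pp$: then $v(a) \geq 0$ and $v(s) \in U$, so $v_{/U}(a/s) = v(a) + U \geq U$. For the reverse inclusion, let $x \in L^{\times}$ with $v_{/U}(x) \geq 0$, i.e.\ $v(x) \geq -u$ for some $u \in U$. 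If $v(x) \geq 0$ then $x \in B_{\mm} \subset B_{\pp}$ (since $B \setminus \mm \subset B \setminus \pp$). Otherwise $-v(x) > 0$ and $-v(x) \leq u$ with $u \in U$, which by convexity forces $-v(x) \in U$. Using that $v$ is surjective onto $G_{\geq 0}$ on $B_{\mm}$ and that $B_{\mm} = (B \setminus \mm)^{-1} B$, pick $s \in B$ with $v(s) = -v(x)$; then $xs \in B_{\mm}$, write $xs = b/t$ with $b \in B$ and $t \in B \setminus \mm \subset B \setminus \pp$, giving $x = b/(ts)$ with $ts \in B \setminus \pp$, hence $x \in B_{\pp}$.

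For the residue field, $B_{\pp}$ is now the valuation ring of $v_{/U}$ with maximal ideal $\pp B_{\pp}$. Unwinding the definitions shows $x \in B_{\pp}^{\times}$ iff $v_{/U}(x) = 0$ iff $v(x) \in U$, while $y \in \pp B_{\pp}\setminus\{0\}$ satisfies $v(y) > u$ for every $u \in U$. I would then define $v_{|U}: k_{\pp} \rightarrow U \cup \{\infty\}$ by $v_{|U}(\bar{x}) = v(x)$ for any lift $x \in B_{\pp}^{\times}$. Well-definedness is the only subtle point: if $x_1 \equiv x_2 \pmod{\pp B_{\pp}}$ with both in $B_{\pp}^{\times}$, then $v(x_1 - x_2)$ exceeds every element of $U$, in particular exceeds $v(x_1), v(x_2) \in U$, so the ultrametric inequality applied to $x_1 = (x_1 - x_2) + x_2$ forces $v(x_1) = v(x_2)$. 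Multiplicativity and the ultrametric inequality for $v_{|U}$ descend from $v$. Surjectivity onto $U$ is immediate: given $u \in U$, surjectivity of $v$ produces $x \in L^{\times}$ with $v(x) = u$, and since $u \in U$ we have $v_{/U}(x) = 0$, so $x \in B_{\pp}^{\times}$ and $v_{|U}(\bar x) = u$.

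The main obstacle I anticipate is the inclusion $R' \subset B_{\pp}$: one must actually exhibit denominators in $B \setminus \pp$, and this is where the Pr\"ufer structure of $B$ together with the localisation $B_{\mm} = (B \setminus \mm)^{-1} B$ (and, in a variant of the argument, Corollary \ref{shorten}) is essential. The order-theoretic bookkeeping on $G/U$ is routine but must be handled carefully to avoid spurious cases when $v(x) < 0$.
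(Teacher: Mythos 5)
Your proof is correct. The paper's own proof is a one-line citation to Schilling's \emph{The Theory of Valuations}, Theorem~5, so what you have done is unfold that reference into a self-contained argument: you build $v_{/U}$ from the order-preserving projection $G \twoheadrightarrow G/U$, identify its valuation ring $R'$ with $B_\pp$ by a two-sided inclusion that uses Proposition~\ref{class-prime-ideals} to read off membership in $\pp$ from the valuation, and then construct $v_{|U}$ directly on $k_\pp$, with well-definedness settled by the ultrametric inequality. The one step deserving a word of justification is the existence of $s \in B$ with $v(s) = -v(x) \in U$: surjectivity of $v\vert_{B\setminus\{0\}}$ onto $G_{\geq 0}$ holds because any $y \in B_\mm$ with $v(y) = g \geq 0$ can be written $y = a/t$ with $a \in B$, $t \in B\setminus\mm$, so $v(t)=0$ and $v(a) = g$; with that observation in hand, your $s$ lies in $B\setminus\pp$ (since $v(s)\in U$) and the inclusion $R' \subset B_\pp$ goes through exactly as you describe. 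The citation buys brevity and appeals to a standard source; your unfolding buys transparency and ties the proposition directly to the paper's own description of $\pp$ via convex subgroups of $G$, which is arguably more in the spirit of the surrounding development.
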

\begin{proof}  This follows from \cite{schilling1950} Theorem 5. 
\end{proof}

In the next Section we will interpret  $v_{/U}$ as a {\em vertical generization} and $v_{|U}$ as a {\em horizontal specialization} of $v$.

We have seen above that prime ideals yields valuations. Conversely, suppose now that a valuation is given.

\begin{prop} Let $B$ be a Pr\"ufer ring, $L=Quot(B)$ and 
$v$ a non-trivial valuation on ${L}$ with valuation ring $R_v \supset B$ and valuation ideal $\mm_v$. Define $\pp = \mm_v \cap B$. Then $\pp$ is a nonzero prime ideal of $B$ and $R_v = B_{\pp}$. 
\label{val}
\end{prop}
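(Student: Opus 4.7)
The plan is to verify the statement in three pieces: $\pp$ is prime, $\pp$ is nonzero, and the two inclusions $B_{\pp} \subset R_v$ and $R_v \subset B_{\pp}$.

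First I would note that $\pp = \mm_v \cap B$ is the contraction of a prime ideal of $R_v$ under the inclusion $B \hookrightarrow R_v$, hence automatically a prime ideal of $B$. To see that it is nonzero, I would exploit the non-triviality of $v$: pick $y \in L^\times$ with $v(y) \neq 0$, and after replacing $y$ by $y^{-1}$ if necessary assume $v(y) > 0$. Writing $y = a/b$ with $a, b \in B$, $b \neq 0$, gives $v(a) = v(y) + v(b) > v(b) \geq 0$ since $b \in B \subset R_v$. Hence $a \in \mm_v \cap B = \pp$ is a nonzero element.

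The inclusion $B_{\pp} \subset R_v$ is the easy direction. For $a/s$ with $a \in B$ and $s \in B \setminus \pp$, we have $a \in B \subset R_v$ and $s \in B \setminus \mm_v$, so $v(s) = 0$ and $s$ is a unit in $R_v$; thus $a/s \in R_v$.

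The main obstacle is the reverse inclusion $R_v \subset B_{\pp}$; here the Prüfer hypothesis enters decisively. By Corollary \ref{pruefer} (applied to the Prüfer domain $B$), the localisation $B_{\pp}$ is a valuation ring of $L$. Hence for any $x \in L^\times$, either $x \in B_{\pp}$ or $x^{-1} \in \pp B_{\pp}$. Assume, for contradiction, that $x \in R_v$ but $x \notin B_{\pp}$. Then $x^{-1} \in \pp B_{\pp}$, so we may write $x^{-1} = p/s$ with $p \in \pp$ and $s \in B \setminus \pp$. Therefore $x = s/p$, and since $x \in R_v$ we get $v(s) \geq v(p)$. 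But $p \in \pp \subset \mm_v$ forces $v(p) > 0$, so $v(s) > 0$ as well. Then $s \in \mm_v \cap B = \pp$, contradicting $s \in B \setminus \pp$. Hence $x \in B_{\pp}$, establishing $R_v \subset B_{\pp}$ and completing the proof.
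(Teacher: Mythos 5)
Your proof is correct and follows essentially the same route as the paper: reduce to the hard inclusion $R_v \subset B_\pp$, invoke that $B_\pp$ is a valuation ring because $B$ is Pr\"ufer, and derive a contradiction from $x \in R_v \setminus B_\pp$ by examining a fractional representation of $x$. The paper argues with $x^{-1} = b/a$ and the element $xb \in \mm_v \cap B$ where you compute $v(s) \geq v(p) > 0$, but this is a cosmetic difference; your explicit verification that $\pp \neq (0)$ is a welcome expansion of the paper's one-line remark.
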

\begin{proof}
Since $\mm_v$ is maximal, it follows from the definition that $\pp$ is a prime ideal. If $\pp$ were the zero ideal, then $v$ would be trivial.
The inclusion $B_{\pp} \subset R_v$ follows from the definition of $\pp$. For the reverse inclusion, let 
$x \in R_v$  and suppose $x \in R_v \setminus B_{\pp}$. 
Since $B$ is a Pr\"ufer ring, $B_{\pp}$ is a valuation ring and hence $x^{-1} \in B_{\pp}$.
This implies $x=\frac{a}{b}$ with $a \in B \setminus \pp$ and $b \in B$, $b\neq 0$. Furthermore, we have $b \in \pp$ since otherwise $x \in  B_{\pp}$. But then $a = xb \in \mm_v \cap B = \pp$, a contradiction. 
\end{proof}

Proposition \ref{val} implies the following result:

\begin{prop}
There is a one-to-one correspondences between the set of prime ideals of a Pr\"ufer ring $B$ and the valuation rings of $L=Quot(B)$ which contain $B$.
\label{bijprimeval}
\end{prop}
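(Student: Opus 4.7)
The plan is to exhibit mutually inverse maps between the two sets. Define the forward map $\Phi$ from valuation rings of $L$ containing $B$ to prime ideals of $B$ by $\Phi(R_v) = \mm_v \cap B$, where $\mm_v$ is the maximal ideal of $R_v$. This is a prime ideal of $B$ since $\mm_v$ is prime in $R_v$ and contraction of primes is prime. Define the backward map $\Psi$ from prime ideals of $B$ to valuation overrings of $L$ by $\Psi(\pp) = B_\pp$. This is well-defined because $B$ is Pr\"ufer, and the localization of a Pr\"ufer domain at any prime is a valuation ring (this is the statement invoked in Corollary \ref{pruefer}); in particular $B_\pp$ is a subring of $L = \mathrm{Quot}(B)$ containing $B$.

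Next I would verify the two compositions separately. For $\Phi \circ \Psi = \mathrm{id}$, take a prime ideal $\pp \subset B$ and compute the maximal ideal of the valuation ring $B_\pp$, which is the extension $\pp B_\pp$; then contraction to $B$ gives $\pp B_\pp \cap B = \pp$, a standard property of localization at a prime. For $\Psi \circ \Phi = \mathrm{id}$, take a valuation ring $R_v \supset B$ with maximal ideal $\mm_v$ and set $\pp = \mm_v \cap B$. If $v$ is nontrivial then $\pp$ is a nonzero prime and Proposition \ref{val} gives exactly $R_v = B_\pp$. The trivial valuation is handled separately: its valuation ring is $L$ itself with $\mm_v = (0)$, so $\pp = (0)$, and indeed $B_{(0)} = L$, so the correspondence extends to the zero prime ideal as well.

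The only subtle point, and the place where one must be a bit careful, is the case distinction for the trivial valuation, since Proposition \ref{val} is stated under the hypothesis that $v$ is nontrivial. Once that degenerate case is handled by inspection, the bulk of the argument is a direct appeal to Proposition \ref{val} together with the elementary fact $\pp B_\pp \cap B = \pp$. No further machinery is required, so the proof reduces to observing that $\Phi$ and $\Psi$ are inverse to one another.
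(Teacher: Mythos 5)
Your proof is correct and follows essentially the same route as the paper: both directions come from the Pr\"ufer property (Corollary \ref{pruefer}) and Proposition \ref{val}, with the trivial valuation matched to the zero ideal. You are merely a bit more explicit than the paper in verifying the composition $\pp B_{\pp}\cap B=\pp$ and in isolating the degenerate case, which is fine.
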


\begin{proof} Any prime ideal $\pp$ yields a valuation ring $B_{\pp}$. Conversely, a valuation ring $R_v$ gives the prime ideal
$\pp = \mm_v \cap B$ and $B_{\pp}=R_v$. Obviously, the zero ideal yields the trivial valuation.
\end{proof}

The above Proposition \ref{bijprimeval} holds for  internal Dedekind rings since they are Pr\"ufer rings (see Corollary \ref{pruefer}).
Together with Theorem \ref{value-group} and Proposition \ref{class-prime-ideals} we obtain a description of the prime spectrum.
Topological statements are given below in Propositions  \ref{maxspec}, \ref{topsurj} and \ref{specrz}.
\begin{cor} Let $B$ be an internal Dedekind ring with quotient field $L$. Then there are bijections between the following sets:
\begin{enumerate}
\item Nonzero prime ideals $\pp \subset B$,
\item Nontrivial valuations $v$ on $L$ with 
$v(B) \geq 0$,
\item Pairs $(\mathcal{F},U)$, where $\mathcal{F}$ is a maximal filter on $\mathcal{L}$ and $U$ is a proper convex subgroup of $\left({^*\bigoplus}_{\mathcal{M}}  \nsz \right) / \mathcal{F}$.
\end{enumerate}
\label{specbij}
\end{cor}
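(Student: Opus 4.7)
The plan is to assemble the corollary purely from the machinery built up earlier in the section; no new computation is required, only threading results together while being careful about what goes to what.

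For the bijection (a) $\leftrightarrow$ (b): since $B$ is Pr\"ufer by Corollary \ref{pruefer}, Proposition \ref{bijprimeval} gives a one-to-one correspondence between prime ideals $\pp \subset B$ and valuation rings of $L$ containing $B$. Under this correspondence, the zero ideal corresponds to $R_v = L$ (the trivial valuation), so excluding it on both sides produces a bijection between nonzero prime ideals and nontrivial valuations $v$ on $L$ with $v(B) \geq 0$.

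For the bijection (a) $\leftrightarrow$ (c): given a nonzero prime $\pp \subset B$, the corollary following Theorem \ref{prop-ideals} provides a unique maximal ideal $\mm \supset \pp$; by Corollary \ref{class-max-ideals}(b) this $\mm$ corresponds to a unique ultrafilter $\mathcal{F}$ on $\mathcal{L}$, and by Theorem \ref{value-group} the value group of $B_{\mm}$ is canonically identified with $G := \left({^*\bigoplus}_{\mathcal{M}} \nsz\right)/\mathcal{F}$. Proposition \ref{class-prime-ideals} then matches the prime ideals contained in $\mm$ with the convex subgroups of $G$; to $\pp$ we assign the pair $(\mathcal{F},U)$ where $U$ is the convex subgroup corresponding to $\pp$. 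Conversely, $(\mathcal{F},U)$ determines $\mm$ via Corollary \ref{class-max-ideals}(b) and then the prime ideal $\pp \subset \mm$ via Proposition \ref{class-prime-ideals}; the two procedures are mutually inverse by construction.

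The only point that is worth verifying with care, and the closest thing here to an obstacle, is the boundary behaviour of $U$: one must check that \emph{$U$ proper} corresponds precisely to \emph{$\pp$ nonzero}, rather than to some other boundary condition. This is settled by unwinding the description recalled after Proposition \ref{class-prime-ideals} (see also Remark \ref{ex-prime-ideals}): $a \in \pp$ iff $v(a) > x$ for every $x \in U$ or $a = 0$, so $U = G$ forces $\pp = (0)$ while $U = \{0\}$ yields $\pp = \mm$. Hence the restriction to nonzero prime ideals in (a) matches exactly the restriction to proper convex subgroups in (c), and the three sets are in canonical bijection.
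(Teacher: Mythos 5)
Your proposal is correct and follows essentially the same route as the paper, which obtains the corollary by combining Proposition \ref{bijprimeval} (via the Pr\"ufer property from Corollary \ref{pruefer}) with Theorem \ref{value-group} and Proposition \ref{class-prime-ideals}. Your explicit check that proper convex subgroups correspond exactly to nonzero primes (with $U=\{0\}$ giving $\mm$ and $U=G$ giving $(0)$) is a detail the paper leaves implicit, and it is handled correctly.
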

We will see below (Proposition \ref{pseudo-arch}) that the condition $v(B) \geq 0$ since there exist valuations on nonstandard number fields where $\nsz$ and $B$ are not contained in the valuation ring.

Let $\mathcal{S}(\mathcal{L})$ be the {\em Stone space} of $\mathcal{L}$, i.e.\ the set of ultrafilters (maximal filters or equivalently prime filters) on $\mathcal{L}$. A subbase of open sets is formed by the sets 
$$\mathcal{D}(a)=\{\mathcal{F}\ |\ \mathcal{F} \text{ is ultrafilter on $\mathcal{L}$ and } V(a) \notin \mathcal{F} \}$$
 where $a \in B$, $a \neq 0$ and $V(a)= \{ \mm \in MaxSpec(B) \ |\ a \in \mm \}$.
Instead of $V(a)$, one could also use elements $F \in \mathcal{L}$, i.e.\ hyperfinite sets of internal maximal ideals, and define a base of  open sets by $\mathcal{D}(F)=\{\mathcal{F}\ |\ \mathcal{F} \text{ is ultrafilter and } F \notin \mathcal{F} \}$.

\begin{prop} Let $MaxSpec(B)$ be the maximum spectrum of $B$ with the Zariski topology and $\mathcal{S}(\mathcal{L})$ the Stone space of $\mathcal{L}$. Then the map 
$$Supp:\ MaxSpec(B) \rightarrow \mathcal{S}(\mathcal{L})$$
 is a homeomorphism.
 \label{maxspec}
\end{prop}
\begin{proof} Bijectivity follows from Corollary \ref{class-max-ideals} (b).\\
 An open set 
$D_{max}(a)$ $= \{ \mm \in MaxSpec(B)\ |\ a \notin \mm \}$ is mapped onto
the open set $\mathcal{D}(a)$. This shows the continuity.
\end{proof}

\begin{remark} Let $\aaa$ be any ideal in $B$. Then the closed set 
$$V(\aaa) = \{ \mm \in MaxSpec(B) \ |\ \aaa \subset \mm \}$$ is mapped onto
the closed set 
$$\{  \mathcal{F}\ |\ \mathcal{F} \text{ is ultrafilter on $\mathcal{L}$ and } V(a) \in \mathcal{F} \text{ for all } a \in \aaa \ \text{ with } a \neq 0\} $$
of the Stone space $\mathcal{S}(\mathcal{L})$. All closed sets are of this type.
\end{remark}

\begin{prop} Let $f: Spec(B) \setminus \{(0)\} \rightarrow MaxSpec(B) \cong \mathcal{S}(\mathcal{L})$ map a nonzero prime $\pp$ ideal to the unique maximal ideal $\mm \supset \pp$. Then $f$ is surjective and continuous with respect to the Zariski topology and the inclusion $MaxSpec(R) \subset Spec(B) \setminus \{(0) \}$ is a continuous section. The fiber $f^{-1}(\mm)$ is  homeomorphic to the ordered set of convex subgroups (with respect to the right order topology) of the value group $G$ of $B_{\mm}$.
\label{topsurj}
\end{prop} 
\begin{proof} $f$ is continuous since for any $a \in B$, $a \neq 0$, the open set 
$$D(a)=\{ \pp \in Spec(B)\setminus \{(0)\} \ |\ a \notin \pp \}$$
 is mapped onto 
the open set $\mathcal{D}(a)$. It follows from Corollary \ref{specbij} that the fiber $f^{-1}(\mm)$ is in bijection with the set of convex subgroups of $G$. 
Now $f^{-1}(\mm) \cap D(a)$ is the set of nonzero prime ideals $\pp \subset \mm$ with $a \notin \pp$. Let $U \subset G$ be the subgroup that corresponds to $\pp$. Then $a \notin \pp$ iff $v(a) \in U$. The set of subgroups $U$ that satisfy the latter condition is open in the right order topology. Furthermore, the open sets have a subbase of this type. This shows the homeomorphism.
\end{proof}

Finally, we want to relate the spectrum $Spec(B)$ and the {\em internal spectrum} ${^*Spec}(B)$. The latter space contains only the {\em internal prime ideals} of $B$ and is a subset of $Spec(B)$. By transfer, internal nonzero prime ideals are maximal and thus ${^*Spec}(B) \setminus \{(0) \} = {^* MaxSpec(B)} = \mathcal{M}$. 

${^*Spec}(B)$ is a $^*$topological space (only internal unions are permitted) and  the topology for which the internal open sets is a base is called the {\em $Q$-topology} \cite{robinson1996}.  Since the $Q$-topology on ${^*Spec}(B)$ is generated by the open sets $D(a)$, $a \in B$, we obtain the following statement:

\begin{prop} ${^*Spec}(B)$ is a subset of $Spec(B)$ and the Q-topology on ${^*Spec}(B)$ coincides with the subspace topology.
\end{prop}

\section{Riemann-Zariski Space and Valuation Spectrum}
We want to describe the Riemann-Zariski space and the valuation spectrum of an internal Dedekind ring $B$ and its quotient field $L$. Recall that the 
{\em valuation spectrum} $Spv(R)$ of a ring $R$ is the set of equivalence classes of valuations on $R$ with the topology generated by the subsets
$$ D_{x/y} = \{ v \in Spv(R)\ |\ v(x) \geq v(y) \text{ and } v(y) \neq \infty \} $$
where $x,y \in R$ (see \cite{huberknebusch}, \cite{wedhorn2012}). The valuation spectrum of a field $F$ is also called {\em Riemann-Zariski space} $RZ(F)$. 
The topology is generated by the basis
$$ D_x = \{  v \in RZ(F) \ |\ v(x) \geq 0 \} $$
where $x \in F$. Hence $D_x$ is the set of valuations $v$ such that $x$ lies in the valuation ring of $v$. 

 In the previous Section we described the set of  valuations $v \in RZ(L)$, where the valuation ring of $v$ contains $B$ (Proposition \ref{bijprimeval}). We say that $v$ is {\em centered} in $B$. This defines the 
  {\em Riemann Zariski} space $RZ(L,B)$ of $L$ relative to $B$ and we endow it with the subspace topology.   Therefore
$$ RZ(L,B) = \bigcap_{x \in B} D_x $$
is an intersection of open sets in $RZ(L)$.

One says that $v$ is a {\em specialization } of $w$, or that $w$ is a {\em generization} of $v$, if $w \in \overline{\{v\}}$. For valuations of a field $L$, 
it is well known that $v$ is a specialization of $w$ iff the valuation rings satisfy $R_v \subset R_w$ (see \cite{wedhorn2012} 4.11).

\begin{prop} The following map is a homeomorphism:

$$ Spec(B) \stackrel{\cong}{\longrightarrow} RZ(L,B) \ ,\ \pp \mapsto B_{\pp} $$
\label{specrz}
\end{prop}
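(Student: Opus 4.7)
The plan is to combine Proposition \ref{bijprimeval} with an explicit matching of basic open sets on both sides of the map. Bijectivity of $\phi: \pp \mapsto B_\pp$ is essentially given: by Corollary \ref{pruefer} the ring $B$ is Pr\"ufer, and Proposition \ref{bijprimeval} then asserts that $\pp \mapsto B_\pp$ is a bijection between $\spec(B)$ and the set of valuation rings of $L$ containing $B$, i.e.\ $RZ(L,B)$. Hence only the topological statement requires work, and I would verify continuity and openness separately on a (sub)basis on each side.

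For continuity, a subbasis for the subspace topology on $RZ(L,B)$ consists of the sets $D_x \cap RZ(L,B)$ for $x \in L$. Writing $v_\pp$ for the valuation with valuation ring $B_\pp$, the preimage is
\[
\phi^{-1}(D_x \cap RZ(L,B)) = \{\, \pp \in \spec(B) \ |\ v_\pp(x) \geq 0 \,\} = \{\, \pp \in \spec(B) \ |\ x \in B_\pp \,\}.
\]
Introduce the ideal $I_x := \{ c \in B \ |\ cx \in B \} \subset B$, which is nonzero since any denominator of $x$ in $B$ lies in $I_x$. Then $x \in B_\pp$ iff some $c \in I_x$ avoids $\pp$, iff $I_x \not\subset \pp$. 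Consequently the preimage equals $\spec(B) \setminus V(I_x)$, which is Zariski-open; this gives continuity.

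For openness it suffices to check that the image of each basic Zariski-open $D(f) = \{ \pp \ |\ f \notin \pp\}$, with $f \in B \setminus \{0\}$, is open in $RZ(L,B)$. Since $f \in B$ gives $v_\pp(f) \geq 0$ for every $\pp$, and $f \notin \pp$ is equivalent to $f \notin \pp B_\pp = \mm_{v_\pp}$, hence to $v_\pp(f) = 0$, one obtains
\[
\phi(D(f)) = \{\, v \in RZ(L,B) \ |\ v(f) = 0 \,\} = D_{1/f} \cap RZ(L,B),
\]
which is open in the subspace topology. Combined with bijectivity this shows $\phi$ is a homeomorphism.

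The main obstacle is the continuity step for $x \notin B$: one has to translate the valuation-theoretic condition $v_\pp(x) \geq 0$ into a purely Zariski condition on $\pp$. The conductor ideal $I_x$ is exactly the bridge that makes this translation work, and once it is in place the openness step follows immediately from the standard fact that an element of $B$ is a unit in $B_\pp$ iff it lies outside $\pp$.
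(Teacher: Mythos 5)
Your proof is correct and follows essentially the same route as the paper: bijectivity comes from Proposition \ref{bijprimeval}, and the homeomorphism is obtained by matching the basic open sets $D(f)\subset Spec(B)$ with $D_{1/f}\cap RZ(L,B)$. Your continuity step via the ideal $I_x=\{c\in B\ |\ cx\in B\}$ just spells out a point the paper leaves implicit (that the sets $D_{1/f}\cap RZ(L,B)$, $f\in B\setminus\{0\}$, really do generate the subspace topology, i.e.\ that $D_x\cap RZ(L,B)$ for arbitrary $x\in L$ is covered by them), and it is a valid and welcome addition.
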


\begin{proof} Bijectivity was shown in Proposition \ref{bijprimeval}. For $x \in B$ and $x\neq 0$, the Zariski-open set 
$D(x) = \{ \pp \in Spec(B)\ |\ x \notin \pp\}$ is mapped onto to the Riemann-Zariski open set $D_{1/x} \cap RZ(L,B)$. This gives the homeomorphism.
\end{proof}

\begin{cor} 
\begin{enumerate}
\item A valuation $v$ is a closed point in $RZ(L,B)$ iff the corresponding prime ideal in $B$ is maximal. 
\item Let $v,w$ be valuations in $RZ(L,B)$ and $\pp_v, \pp_w$ the corresponding prime ideals in $B$. Then $v$ is a specialization of $w$ iff $\pp_w \subset \pp_v$.
\end{enumerate}
\end{cor}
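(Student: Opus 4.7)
The plan is to derive both parts directly from the homeomorphism $\mathrm{Spec}(B) \cong RZ(L,B)$ established in Proposition \ref{specrz}, reducing everything to well-known facts about the Zariski topology on $\mathrm{Spec}(B)$. No new construction is required; the key observation is that since the correspondence $\pp \mapsto B_{\pp}$ is a homeomorphism, both the notion of \emph{closed point} and the \emph{specialization} preorder transfer verbatim from $\mathrm{Spec}(B)$ to $RZ(L,B)$.

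For part (a), I would recall that the closed points of $\mathrm{Spec}(B)$ are exactly the maximal ideals: for any $\pp \in \mathrm{Spec}(B)$, one has $\overline{\{\pp\}} = V(\pp) = \{\qqq \in \mathrm{Spec}(B) \mid \pp \subset \qqq\}$, so $\{\pp\}$ is closed precisely when no prime ideal properly contains $\pp$. Transporting along the homeomorphism of Proposition \ref{specrz}, $v$ is a closed point in $RZ(L,B)$ iff $\pp_v$ is a closed point in $\mathrm{Spec}(B)$ iff $\pp_v$ is maximal.

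For part (b), the same description of closures in $\mathrm{Spec}(B)$ gives $\pp_v \in \overline{\{\pp_w\}}$ iff $\pp_w \subset \pp_v$. Applying the homeomorphism again, this translates to: $v \in \overline{\{w\}}$ iff $\pp_w \subset \pp_v$, which is exactly the claim. As a consistency check, one may verify this agrees with the valuation-theoretic criterion recalled just before the corollary, namely that $v$ is a specialization of $w$ iff $R_v \subset R_w$. Under $\pp \mapsto B_{\pp}$ this becomes $B_{\pp_v} \subset B_{\pp_w}$, and for localizations of a domain at prime ideals this inclusion is equivalent to $\pp_w \subset \pp_v$: if $\pp_w \subset \pp_v$ then $B \setminus \pp_v \subset B \setminus \pp_w$, so $B_{\pp_v} \subset B_{\pp_w}$; conversely, if $x \in \pp_w \setminus \pp_v$ then $1/x \in B_{\pp_v} \subset B_{\pp_w}$, contradicting the non-invertibility of $x$ in $B_{\pp_w}$.

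The argument is essentially formal once Proposition \ref{specrz} is in hand, so there is no serious obstacle; the only point worth double-checking is that the specialization order on $\mathrm{Spec}$ uses the inclusion in the \emph{correct} direction, which the explicit closure formula $\overline{\{\pp\}} = V(\pp)$ settles.
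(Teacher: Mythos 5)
Your argument is correct and is exactly how the paper treats this statement: the corollary is stated as an immediate consequence of the homeomorphism $\mathrm{Spec}(B)\cong RZ(L,B)$ of Proposition \ref{specrz}, combined with the standard description of closures and closed points in $\mathrm{Spec}(B)$ and the recalled criterion that specialization of valuations corresponds to inclusion of valuation rings. Your extra verification that $B_{\pp_v}\subset B_{\pp_w}$ is equivalent to $\pp_w\subset\pp_v$ is a sound consistency check but not a departure from the paper's (implicit) proof.
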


The following Proposition is a general fact for valuation rings (see \cite{wedhorn2012} 4.12).

\begin{prop} Let $v$ be a valuation on $L$ with value group $G$.
Assume that $U$ is a convex subgroup of $G$ and $w=v_{/U}$ is the corresponding valuation of $L$ having value group $G/U$. Then $w$ is a generization of $v$ and this construction gives all generizations of $v$. 
\label{general}
\end{prop}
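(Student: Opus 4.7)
The plan is to reduce to the valuation-theoretic criterion recalled immediately before this Proposition: for valuations of the field $L$, $w$ is a generization of $v$ if and only if the valuation rings satisfy $R_v \subset R_w$. Thus the task reduces to identifying the set of valuation overrings of $R_v$ inside $L$ with the set of convex subgroups of $G$, via the assignment $U \mapsto R_{v_{/U}}$.

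For the easy direction, given a convex subgroup $U \subset G$, the valuation $w = v_{/U}$ has value group $G/U$ (with the order induced from $G$; convexity of $U$ is what makes this order well-defined). By construction its valuation ring $R_w$ contains every $x \in L$ with $v(x) \in U$, hence contains all units of $R_v$ together with $R_v$ itself. So $R_v \subset R_w$, and $w$ is a generization of $v$. In fact, as in Proposition \ref{isol-val} (essentially Schilling's Theorem 5), $R_w$ equals the localisation $(R_v)_{\pp_U}$ at the prime ideal $\pp_U$ of $R_v$ that corresponds to $U$ under Proposition \ref{class-prime-ideals}.

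For the converse, assume $w$ is a generization of $v$ with $R_v \subset R_w \subset L$, and set $\pp := \mm_w \cap R_v$. The key observation --- which I would argue exactly as in the proof of Proposition \ref{val} --- is that $R_w = (R_v)_{\pp}$: if $x \in R_w \setminus (R_v)_{\pp}$, then $x^{-1} \in (R_v)_{\pp} \subset R_w$ with $x^{-1} \in \mm_w$, whence $1 = x \cdot x^{-1} \in \mm_w$, a contradiction. Thus every valuation overring of $R_v$ in $L$ is a localisation of $R_v$ at a prime ideal. Now Proposition \ref{class-prime-ideals} identifies $\pp$ with a unique convex subgroup $U \subset G$, and Proposition \ref{isol-val} then forces the valuation on $(R_v)_{\pp}$ to be equivalent to $v_{/U}$. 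Therefore $w \sim v_{/U}$, as claimed.

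The main --- really only --- obstacle is the identification of valuation overrings of $R_v$ with localisations at prime ideals; this is the classical valuation-theoretic fact underlying \cite{wedhorn2012} 4.12 and is already implicit in the paper's Propositions \ref{val} and \ref{bijprimeval}. Once it is in hand, matching overrings with convex subgroups of $G$ is a direct consequence of the bijections established in Proposition \ref{class-prime-ideals} and Proposition \ref{isol-val}, and no further work is required.
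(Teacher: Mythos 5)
Your argument is correct, but it is not the paper's route: the paper disposes of this Proposition with a one-line citation to \cite{wedhorn2012}~4.12, whereas you reconstruct the underlying proof from the specialization criterion $R_v \subset R_w$ (the fact quoted just before the Proposition, i.e.\ \cite{wedhorn2012}~4.11) together with the dictionary between valuation overrings of $R_v$ in $L$, localizations of $R_v$ at prime ideals, and convex subgroups of $G$. That reconstruction is sound: the easy direction is as you say, and in the converse your key identity $R_w=(R_v)_{\pp}$ with $\pp=\mm_w\cap R_v$ is the classical overring--localization correspondence, argued in the same style as the paper's Proposition \ref{val}. Two small points are worth tightening. First, in the step ``$x\in R_w\setminus (R_v)_{\pp}$ implies $x^{-1}\in(R_v)_{\pp}$ with $x^{-1}\in\mm_w$'' you should make explicit that $x^{-1}$ is a \emph{non-unit} of the valuation ring $(R_v)_{\pp}$ (otherwise $x\in(R_v)_{\pp}$), and that the maximal ideal $\pp(R_v)_{\pp}$ is contained in $\mm_w$ because $\pp\subset\mm_w$ and elements of $R_v\setminus\pp$ are units of $R_w$; only then does $1=x\cdot x^{-1}\in\mm_w$ give the contradiction. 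Second, Propositions \ref{class-prime-ideals} and \ref{isol-val} are stated in the paper for the localization $B_{\mm}$ of an internal Dedekind ring, so for a general valuation $v$ of $L$ you are really invoking the underlying general facts of \cite{schilling1950} (Corollary 1, Lemma 11, Theorem 5) for arbitrary valuation rings; that is legitimate, but should be said. What your approach buys is a self-contained proof inside the paper's framework, making transparent that the generizations of $v$ are exactly the overrings of $R_v$, hence the localizations $(R_v)_{\pp}$, hence the quotients $v_{/U}$ by convex subgroups; what the paper's citation buys is brevity, since the statement is indeed a standard fact about valuation spectra.
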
 

\begin{example}
If $v \in RZ(L,B)$ is a closed point so that the value group is $G=\left({^*\bigoplus}_{\mathcal{M}}  \nsz \right) / \mathcal{F}$, then the generizations of $v$ correspond to convex subgroups of $G$.
The smallest nonzero convex subgroup is $\zz$.
\hfill$\lozenge$

\end{example}

Next, we show that $RZ(L)$ is strictly larger than $RZ(L,B)$. The {\em internal} valuations of $L$ correspond to internal maximal ideals of $B$ and are therefore centered in $B$. But there exist {\em external} valuations on $L$ that are not centered in $B$. Such valuations can be defined using Archimedean absolute values. The construction is similar to a generization, but the underlying absolute values is not a valuation.

\begin{prop} Let $L$ be an internal number field and suppose $|\ | : L \rightarrow \nsr$ is an internal Archimedean absolute value. Let $U$ be a convex subgroup of $\nsr$ which contains the finite hyperreal numbers $\fin(\nsr)$. Define  the map $v: L^{\times} \rightarrow \nsr/U$ by setting $v(x) \equiv - \log|x| \mod U$. Then $v$ is a valuation. 
\label{pseudo-arch}
\end{prop}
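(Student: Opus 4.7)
The plan is to verify the two defining properties of a valuation — multiplicativity and the ultrametric inequality — after first checking that $v$ is well-defined and takes values in an ordered group. Since $|\ |$ is internal and the natural logarithm on $\rr_{>0}$ transfers to a map $\log : \nsr_{>0} \to \nsr$, the composition $-\log|x|$ is well-defined for $x \in L^{\times}$. Also $\nsr/U$ is an ordered abelian group, since a quotient of an ordered abelian group by a convex subgroup inherits a canonical total order. I would then extend $v$ to all of $L$ by the standard convention $v(0)=\infty$.

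Multiplicativity $v(xy) = v(x) + v(y)$ is routine: $|xy| = |x|\cdot|y|$ by definition of an absolute value, and $\log$ turns products into sums, both facts by transfer. So the equality already holds in $\nsr$, a fortiori modulo $U$.

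The heart of the argument is the ultrametric inequality $v(x+y) \geq \min(v(x), v(y))$ in $\nsr/U$, and this is where the hypothesis $\fin(\nsr) \subset U$ is crucial. Starting from the (Archimedean) triangle inequality $|x+y| \leq |x|+|y| \leq 2\max(|x|,|y|)$ and applying $-\log$ gives, in $\nsr$,
$$v(x+y) \;\geq\; \min(v(x), v(y)) - \log 2.$$
The discrepancy $-\log 2$ is a finite real number, hence lies in $U$, so it vanishes modulo $U$. Using the description of the order on $\nsr/U$ (an element $[c]$ is non-negative iff $c + u \geq 0$ for some $u \in U$, which applies here with $u=\log 2$), one concludes $[v(x+y)] \geq [\min(v(x), v(y))]$, as required.

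The main obstacle — and the conceptual point of the proposition — is precisely this last step: an Archimedean absolute value cannot satisfy the strong triangle inequality on the nose, so one must absorb the bounded multiplicative slack between $|x|+|y|$ and $\max(|x|,|y|)$ (the factor $2$, i.e.\ the constant $\log 2$) into the quotient. The assumption that $U$ contains the entire subgroup $\fin(\nsr)$ of finite hyperreals is exactly what makes any such bounded slack disappear modulo $U$, so that the Archimedean estimate is upgraded to a genuine ultrametric one and $v$ becomes a Krull valuation on $L$.
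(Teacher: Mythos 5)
Your proof is correct and follows essentially the same route as the paper's: multiplicativity is immediate, and the key step is the bound $|x+y| \leq 2\max\{|x|,|y|\}$, which after applying $-\log$ gives the ultrametric inequality up to the error term $-\log 2 \in \fin(\nsr) \subset U$, which disappears in the quotient. Your additional remarks on why $\nsr/U$ is a totally ordered group and on the explicit description of the order on the quotient are correct and make the argument a bit more self-contained than the paper's version, but the substance is identical.
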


\begin{proof} Obviously, $v$ is a homomorphism with $v(1)=0$. An Archimedean absolute value does not satisfy the strict triangle inequality, but
$$ |x+y| \leq 2 \max\{|x|,|y|\}$$
holds and hence $$ v(x+y) \geq -\log(2) + \min\{v(x),v(y)\} $$
 (compare  \cite{robinson-roquette}). Since $-\log(2) \in U$, we conclude that $v$ is a valuation.
\end{proof}

\begin{remark}
By abuse of notation, we call such valuations {\em pseudo-Archimedean}. 
For all nontrivial proper convex subgroups $U$ of $\nsr$, there exist infinite numbers $N \in \nsz$ such that $v(N) \equiv -\log|N| < 0 \mod U$. Thus $\nsz$ is not a subset of the valuation ring and such valuations are not centered in $B$. 
Furthermore, all integers $x \in \zz$ with $x \neq 0$ satisfy $v(x) \equiv 0 \mod U$.
Note that pseudo-Archimedean valuations are external (because $U$ is external), although the underlying nonstandard absolute value (with values in $\nsr$) is internal.  Such valuations (with $U=\fin(\nsr)$) are used in arithmetic applications of nonstandard analysis \cite{robinson-roquette}.\hfill$\lozenge$
\end{remark}

The following Example shows that there exist valuations of $L$ which are neither centered in $B$ nor pseudo-Archimedean. 

\begin{example} Let $L$ be an internal number field with ring of integers $B$. Choose any prime number $x \in \nn$. Then select a non-unit $y \in B$ which is transcendent over $\zz$ and relatively prime to $x$ so that $x$ and $y$ are not contained in any maximal ideal of $B$. For example, choose $y$ to be a nonstandard (i.e.\ infinite) prime number.  
Set $A=\zz[x,y] = \zz[y] \subset L$. Then $(x,y)$ is a maximal ideal in $A$, but $(x,y)=(1)$ in $B$. It follows that there exists a valuation ring $R$ with $A \subset R \subset L$ such that $x$ and $y$ are contained in the valuation ideal of $R$ (see \cite{matsumura} 10.2). The valuation is not centered in $B$ and also not pseudo-Archimedean since the integer $x$ lies in the valuation ideal.\hfill$\lozenge$
\label{ex-notrz}
\end{example}

Now we consider valuations of $B$. 
There is a natural inclusion of valuation spectra $Spv(L) \subset Spv(B)$ as shown in Figure \ref{spvspec1}.
The support $supp(v)$ of a valuation $v: B \rightarrow G \cup \{\infty\}$ is defined by $v^{-1}(\{\infty\})$  and gives  a prime ideal in $B$.

\begin{figure*}[h!]
\begin{center}
\begin{tikzpicture}

 \node (rz) at (0,0) {$RZ(L,B)$};
 \node (spvL) at (3,0) {$Spv(L)$};
 \node (spvB) at (7,0) {$Spv(B)$};
 \node (specL) at (3,-2) {$Spec(L)$};
 \node (specB) at (7,-2) {$Spec(B)$};
 
  \draw[->, thick]  (rz)  edge node[above] {$\subset$}  (spvL);
  \draw[->, thick]  (spvL)  edge node[right] {}  (specL);
   \draw[->, thick]  (spvB)  edge node[right] {supp} node[left] {} (specB);
     \draw[->, thick]  (spvL)  edge node[above] {$\subset$}  (spvB);
       \draw[->, thick]  (specL)  edge node[above] {$(0) \mapsto (0)$}  (specB);
  
\end{tikzpicture}
\caption {Riemann-Zariski space, valuation spectrum and Zarisiki spectrum of $L$ and $B$.}
\label{spvspec1}
\end{center}
\end{figure*}

A valuation $v: B \rightarrow G \cup \{\infty\}$ with $supp(v)=\pp$ uniquely corresponds to a valuation $\tilde{v}$ on the residue field $k_{\pp}=Quot(B/\pp)$. 
The fiber of the support map over a point $\pp \in Spec(B)$ is hence the valuation spectrum $Spv(k_{\pp})$. 
The commutative diagram in Figure \ref{spvspec2} generalizes the right-hand side of Figure \ref{spvspec1}.

\begin{figure*}[h!]
\begin{center}
\begin{tikzpicture}

  \node (spvL) at (3,0) {$Spv(k_{\pp})$};
 \node (spvB) at (7,0) {$Spv(B)$};
 \node (specL) at (3,-2) {$Spec(k_{\pp})$};
 \node (specB) at (7,-2) {$Spec(B)$};
 
  \draw[->, thick]  (spvL)  edge node[right] {}  (specL);
   \draw[->, thick]  (spvB)  edge node[right] {supp} node[left] {} (specB);
     \draw[->, thick]  (spvL)  edge node[above] {$\subset$}  (spvB);
       \draw[->, thick]  (specL)  edge node[above] {$(0) \mapsto \pp$}  (specB);
  
\end{tikzpicture}
\caption {Valuation spectrum and Zarisiki spectrum of $B$ and $k_{\pp}=Quot(B/\pp)$.} 
\label{spvspec2}
\end{center}
\end{figure*}

The valuation spectrum $Spv(B)$ is a large space: the fiber above a prime ideal $\pp$ consists of all valuations of $k_{\pp}$.   
We can construct interesting elements in $Spv(B)$ by {\em horizontal} (primary) specializations and {\em vertical} (secondary) specializations  (see \cite{huberknebusch}, \cite{wedhorn2012}) of valuations in $RZ(L,B)$. 

Suppose that $w \in RZ(L,B)$  corresponds to a prime ideal $\pp$ (see Proposition \ref{specrz}) and $\pp$ is contained in a maximal ideal  $\mm$. Then $B_{\mm} \subset B_{\pp} \subset L$.  Let $G$ be the value group of the valuation $v$ having the valuation ring $B_{\mm}$. The valuation ring $B_{\pp}$ yields a valuation $w=v_{/U}$ with
value group $G/U$ where $U$ is the convex subgroup of $G$ corresponding to $\pp$ (see Proposition \ref{class-prime-ideals}).
$w $ is a {\em vertical} generization of $v$ (see Proposition \ref{general}). Conversely regarded,  $v$ is a vertical specialization of $w$. 

Furthermore, $v \in RZ(L,B)$ admits a {\em horizontal} specialization $v_{|U} : B \rightarrow U \cup \{ \infty \}$ defined by $v_{|U}(x)=v(x)$ if $v(x) \in U$ and $v(x)=\infty$ otherwise. Note that $v_{|U}$ defines a valuation on $B$ (and on $B_{\pp}$, as well as on the residue field $k_{\pp}$), but not on $L$. One has $supp(v_{|U})=\pp$.
 
\begin{example} Let $\mm$ be an internal maximal ideal of $B$ and $v$ the corresponding valuation on $L$ having value group $\nsz$. Define $\pp = \mm^{\infty}$. The associated convex subgroup is $\zz \subset \nsz$. This gives the horizontal specialization $v_{|\zz} : B \rightarrow \zz \cup \{\infty\}$ of $v$ with $supp(v_{|\zz}) = \pp$.
Suppose that $K$ is a number field with ring of integers $\oo_K$ and maximal ideal $\mm$, $L={^*K}$, $B=\noo_K$ and prime ideal $\pp=\mm^{\infty}$.
Then the residue field $k_{\pp}=Quot(B/\pp)$ is isomorphic to the standard completion $K_{\mm}$ of $K$ and the valuation $v_{|\zz}$ is the usual $\mm$-adic valuation.\hfill$\lozenge$ 
\label{horiz-int}
\end{example}  

If $\mm \subset B$ is any (internal or external) maximal ideal and $v \in RZ(L,B)$ the corresponding valuation having value group $G$ (see Theorem \ref{value-group}), then $v$ admits various  vertical generizations and horizontal specializations (see Figure \ref{vert-horiz}) associated to the convex subgroups of $G$. The subgroup $\zz$ of $G$ is particularly interesting. 
Similar to the above Example \ref{horiz-int}, let $\pp = \mm^{\infty}$ be the prime ideal corresponding to $\zz$ and $k_{\pp} = Quot(B/\pp)$ the residue field. Then $v_{/\zz}$ is a generization and $v_{|\zz}$ a specialization of $v$.
The trivial valuations on the residue fields $k_{\pp}$ 
and $k_{\mm}=B/\mm$ are horizontal specializations of $v_{/\zz}$ and $v_{|\zz}$.  The trivial valuation on $L=Quot(B)$ is the generic point of $Spv(B)$.   
\begin{figure*}[h!]
\begin{center}
\begin{tikzpicture}

 \node (v) at (0,0) {$v$};
 \node (vz) at (0,2) {$v_{/\zz}$};
 \node (v0) at (2,0) {};
 \node (vz0) at (2,2) {};
 \node (vz0v) at (0,3) {};
 \node (vzh) at (3.5,0) {$v_{|\zz}$};
 \node (vth) at (5.5,0) {$v_{k_{\mm},triv}$};
 \node (vtv) at (0,4) {$v_{L,triv}$};
 \node (vat) at (3.5,2) {$v_{k_{\pp},triv}$};

  \draw[->, thick, decorate, decoration={snake,amplitude=.5mm,segment length=3mm,post length=1mm}]  (vz) -- (v);
 \draw[dotted, thick]  (v)  edge node[right] {}  (v0);
  \draw[dotted, thick]  (vtv)  edge node[right] {}  (vz0v);
    \draw[dotted, thick]  (vz)  edge node[right] {}  (vz0);
   \draw[->, thick, decorate, decoration={snake,amplitude=.5mm,segment length=3mm,post length=1mm}]  (v0) -- (vzh);
     \draw[->, thick, decorate, decoration={snake,amplitude=.5mm,segment length=3mm,post length=1mm}]  (vzh) -- (vth);
       \draw[->, thick, decorate, decoration={snake,amplitude=.5mm,segment length=3mm,post length=1mm}]  (vat) -- (vzh);
           \draw[->, thick, decorate, decoration={snake,amplitude=.5mm,segment length=3mm,post length=1mm}]  (vz0) -- (vat);
               \draw[->, thick, decorate, decoration={snake,amplitude=.5mm,segment length=3mm,post length=1mm}]  (vz0v) -- (vz);
  
\end{tikzpicture}
\caption {Vertical generizations and horizontal specialisations of $v \in RZ(L,B)$. The support of the valuations in the columns is $(0)$, $\pp$ and $\mm$, respectively.}
\label{vert-horiz}
\end{center}
\end{figure*}

\begin{remark} In the above Example \ref{horiz-int} we obtained the standard $\mm$-adic valuation in  $Spv(K_{\mm}) $ as a horizontal specialisation of the the $\mm$-adic valuation on ${^*K}$. Note that 
$Spv(K_{\mm})$ can be quite large (see Example \ref{ex-other-val} below), whereas $RZ(K_{\mm},\oo_{K_{\mm}})$ only consists of the $\mm$-adic and the trivial valuation. Similarly, $Spv(B)$ and $Spv(L)$ are much bigger than $RZ(L,B)$ which is homeomorphic to $Spec(B)$ (see Proposition \ref{specrz}).\hfill$\lozenge$
\end{remark}

\begin{example}
Suppose $K=\qq$ and $\mm=(p)$. We construct a valuation on $\qq_p$ which is neither the $p$-adic nor the trivial valuation.
Set $x=p$, $y=1+p+p^2+\dots$ and $A=\zz[x,y]=\zz[y]$. Similar as in Example \ref{ex-notrz},  there exists a valuation ring $R$ with $A \subset R \subset \qq_p$ such that $x$ and $y$ are in the valuation ideal of $R$. Since $(x,y)=1$ in $\zz_p$, the valuation ring $R$ is not equal to $\zz_p$.\hfill$\lozenge$
\label{ex-other-val}
\end{example}

Finally, we describe the valued field $Quot(B/supp(w))$ of valuations $w \in Spv(B)$.

\begin{prop} Let $w \in Spv(B)$ and assume that $\pp=supp(w)$ is nonzero.  Let $\mm$ be the unique maximal ideal above $\pp$.
Then $\mm$ yields a valuation $v$ on $B$ with value group
$G=\left({^*\oplus}_{\mathcal{M}} \nsz \right) / \mathcal{F}$ and $\pp$ corresponds to a convex subgroup $U \subset G$. 
Set $H= G_{\geq 0} \setminus U$. Then $\pp= \mm^H$, where the latter prime ideal is defined by all $x \in \mm$ with $v(x) \in H$ (or $x=0$) and one has
$$ k_{\pp} = Quot(B/\pp) = Quot(B/\mm^H). $$
\end{prop}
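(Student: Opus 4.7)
The valuation $w$ enters only through its support $\pp=\mathrm{supp}(w)$, which is a nonzero prime ideal of $B$. So the task is to describe an arbitrary nonzero prime ideal $\pp\subset B$ explicitly in terms of the unique maximal ideal containing it. I would first invoke the earlier corollary that every nonzero prime ideal of $B$ is contained in exactly one maximal ideal $\mm$, so $\pp\subset\mm$ is well defined. Then Theorem~\ref{value-group} provides the valuation $v\colon L\to G\cup\{\infty\}$ attached to $\mm$ with value group $G=\bigl({^*\bigoplus}_{\mathcal{M}}\nsz\bigr)/\mathcal{F}$, and Proposition~\ref{class-prime-ideals} identifies $\pp$ with a convex subgroup $U\subset G$ via the rule $a\in\pp$ iff $a=0$ or $v(a)>x$ for every $x\in U$.

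The key step is to rewrite this characterization in terms of $H=G_{\geq 0}\setminus U$. For $a\in\mm\setminus\{0\}$ one has $v(a)\geq 0$ since $a\in B\subset B_{\mm}$. If $v(a)>x$ for all $x\in U$, then in particular $v(a)\neq y$ for every $y\in U$, so $v(a)\in G_{\geq 0}\setminus U=H$. Conversely, suppose $v(a)\in H$, i.e.\ $v(a)\geq 0$ and $v(a)\notin U$. For any $x\in U$ I must check $v(a)>x$: if instead $v(a)\leq x$, then since $0\leq v(a)\leq x$ with $0,x\in U$, convexity of $U$ would force $v(a)\in U$, contradicting $v(a)\in H$. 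Hence $v(a)>x$ for every $x\in U$. This shows $\pp=\{a\in\mm\mid v(a)\in H\}\cup\{0\}$, which is precisely the ideal $\mm^{H}$ as defined in the proposition.

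Finally, the residue field statement $k_{\pp}=\mathrm{Quot}(B/\pp)=\mathrm{Quot}(B/\mm^{H})$ is immediate once the equality of ideals $\pp=\mm^{H}$ is established, since both expressions refer to the same quotient ring. The only real content is the convexity argument in the middle paragraph, and the main thing to be careful about is the strictness of the inequality $v(a)>x$ (which forces $v(a)\notin U$) combined with $v(a)\geq 0$ (which places it in $G_{\geq 0}$); these two together are exactly the conditions defining $H$. No further obstacle should arise, as all ingredients (unique maximal ideal above a prime, description of the value group, and classification of prime ideals by convex subgroups) have already been proved in the preceding subsections.
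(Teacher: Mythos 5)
Your proof is correct and follows the same route as the paper: it reduces to the classification of prime ideals by convex subgroups (Proposition~\ref{class-prime-ideals}, Remark~\ref{ex-prime-ideals}) and then checks that the condition ``$u<v(x)$ for all $u\in U$'' is equivalent to ``$v(x)\in H$''. The paper simply asserts this equivalence without spelling out the convexity argument; you supply those details (one small point: when you take $v(a)\leq x$ you implicitly need $x\geq 0$ to apply convexity, but the case $x<0$ contradicts $v(a)\geq 0$ outright, so the argument is fine).
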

\begin{proof} By Proposition \ref{class-prime-ideals} and Remark \ref{ex-prime-ideals}, $x \in \pp$ iff $u<v(x)$ for all $u \in U$ or $x=0$. This is equivalent to $v(x) \in H$. 
\end{proof}

Note that a valued field may have many different valuations (see  Example \ref{ex-other-val}).

\section{Conclusion and Outlook}
We classified the prime ideals and general ideals of nonstandard Dedekind rings $B$. The external ideals are particularly interesting. Prime ideals are contained in exactly one maximal ideal. The set of prime ideals in a given maximal ideal $\mm$ is totally ordered and the largest prime ideal $\pp \subsetneq \mm$ is $\pp=\mm^{\infty}$. If $K$ is a global number field, $B=\noo_K$ and $\mm$ a standard prime ideal, then the 
residue field of $\mm^{\infty}$ gives the standard completion $K_{\mm}$. This can be generalized to the standard adeles and ideles which are  subquotients of ${^*K}$. We will treat this in a separate paper.

The spectrum of a nonstandard Dedekind ring could be described using lattice and valuation theory. The
Riemann-Zariski space and the valuation spectrum are large spaces which we could only partly describe.   Some elements are given by vertical generizations or horizontal specialisations of 
valuations corresponding to maximal ideals in $B$, but other possibilities exist.

The Archimedean places of a number field can be turned into a valuation which is not possible in the standard case. This can be used in arithmetic applications of nonstandard analysis, where standard function fields of varieties are embedded into nonstandard number fields \cite{robinson-roquette}, \cite{kani1980}.

In future work, we hope to apply these results to class field theory, zeta functions and arithmetic geometry.\\

\bibliographystyle{jloganal}

\bibliography{nsabib}

\end{document}